\newtheorem{theorem}{Theorem}[section]
\newtheorem{lemma}[theorem]{Lemma}
\newtheorem{proposition}[theorem]{Proposition}
\newtheorem{corollary}[theorem]{Corollary}
\newtheorem{definition}[theorem]{Definition}
\newtheorem{remark}[theorem]{Remark}
\numberwithin{equation}{section}
\numberwithin{theorem}{section}
\newcommand{\mc}[1]{{\mathcal #1}}
\newcommand{\bb}[1]{{\mathbb #1}}
\newcommand{\eps}{\varepsilon}
\newcommand{\un}[1]{{\mathbf 1}_{#1}}
\newcommand{\Glimsup}{\mathop{\textrm{$\Gamma\!\!$--$\varlimsup$}}}
\newcommand{\Gliminf}{\mathop{\textrm{$\Gamma\!\!$--$\varliminf$}}}
\newcommand{\Glim}{\mathop{\textrm{$\Gamma\!\!$--$\lim$}}}
\newcommand{\bel}[2]{\begin{equation} \label{#1} \begin{split} #2
 \end{split} \end{equation}}
\newcommand{\dangle}[2]{\langle #1,\,#2 \rangle}
\DeclareMathOperator{\hess}{Hess}
\newcommand{\undn}{\underline{n}}
\author[G.Di Ges\`u]{Giacomo Di Ges\`u}
\address{ Giacomo Di Ges\`u, TU Vienna, 
E101 - Institut f\"ur Analysis und Scientific Computing, 
Wiedner Hauptstr.\ 8, 
1040 Vienna, Austria.}
\email{giacomo.di.gesu@tuwien.ac.at}
\author[M.Mariani]{Mauro Mariani}
\address{Mauro Mariani, Dipartimento di Matematica, Universit\`a degli Studi di Roma La Sapienza, Piazzale Aldo Moro 5, 00185, Roma, Italy.}
\email{mariani@mat.uniroma1.t}
\title{Full metastable asymptotic of the Fisher information}
\begin{document}

\keywords{Fisher Information, Metastability, $\Gamma$-convergence, Semiclassical spectral theory, Witten Laplacian.}
\subjclass[2010]{35P20,28D20,49J45,60F10}
\begin{abstract}
We establish an expansion by $\Gamma$-convergence of the Fisher information relative to
the reference measure $e^{-\beta V}dx$, where
$V$ is a generic multiwell potential and $\beta\to\infty$. 
The expansion reveals a  hierarchy of scales
reflecting the metastable behavior of the underlying overdamped Langevin dynamics: 
distinct scales emerge and become relevant  
depending on whether one considers probability measures 
concentrated on local minima of $V$, probability measures concentrated on critical points of $V$, or generic probability measures on $\bb R^d$. 
We thus fully describe the asymptotic behavior of minima 
of the Fisher information over regular sets of probabilities.
 The analysis mostly relies on spectral properties of diffusion operators and the 
related semiclassical Witten Laplacian  and also covers the case of a compact smooth manifold as underlying space.
\end{abstract}

\maketitle

\section{Introduction}
\label{s:intro}
The Fisher information of a probability measure $\mu $ relative to a reference measure $m$ on a smooth manifold is given by the expression
\bel{e:relFisher}{ \mc I(\mu|m)  := 
\tfrac{1}{2} \int \frac{|\nabla \varrho|^2}{\varrho} \, dm      =     2 \int     |\nabla \sqrt{\varrho}|^2 \, dm     ,   
}
assuming that $\varrho: = \frac{d\mu}{dm} $ exists and is sufficiently regular (see the precise
Definition~\ref{d:dvmore} below). This is a classical and ubiquitous object measuring the discrepancy between two measures and appearing in various fields as Statistics,
Information Theory and Statistical Mechanics.

In the context of Statistical mechanics the reference measure usually appears in the form $m_\beta = e^{-\beta V}dx$,
where $V$ is a suitable potential describing the interaction in the model, $\beta$ is a positive parameter, proportional to the inverse temperature
and $dx$ is the volume measure corresponding to the absence of interaction.   The main interest often lies in the dynamical properties of the associated heat flow evolution featuring $m_\beta$ as stationary measure. Under suitable regularity assumptions on $V$ it may 
be described by the Fokker-Planck equation 
\bel{e:fokkerplanckev}{
\partial_t u =   \beta^{-1}\Delta u +   \mathrm{div}(u\nabla V ) 
}
or, on a pathwise level, by the overdamped Langevin equation 
\bel{e:diffusionprocess}{
\dot X_t =- \nabla V (X_t)+\sqrt{2\beta^{-1}} \, \dot W_t    ,
}
where $W_t$ is Brownian motion.
The Fisher information may enter the description and analysis of a Statistical Mechanics model in various ways: through functional inequalities, prominently the Log-Sobolev and HWI inequality (see e.g. \cite{BGL} and references therein and in particular~\cite{MS} for the $\beta\to\infty$ regime considered here); as rate functional in Large deviation principles describing fluctuations of the empirical occupation measure
of~\eqref{e:diffusionprocess} when $t\to\infty$, \cite{DV};
or also as a tool to give an alternative construction of the dynamics~\eqref{e:fokkerplanckev} as gradient flow with respect to the relative entropy functional 
\cite{JKO} (see also the discussion at the end of this introduction). 
\subsection{The low temperature regime and metastability}
\label{ss:lowt}
In this paper we study the asymptotic behaviour of the Fisher information relative to $m_\beta= e^{-\beta V}dx$ in the low temperature regime, that is $\beta\to \infty$. When the potential $V$ admits several local minima, in this regime energy traps are created which slow down the dynamics~\eqref{e:diffusionprocess} around these minima and produce metastability effects.

In the heuristic picture,  there are several relevant time scales that feature a non-trivial dynamics: 
on the natural time-scale on which the process is defined, the system essentially follows the
deterministic gradient flow with respect to the potential $V$. 
In particular, all critical points of V are absorbing for the limiting dynamics.
On a longer time scale (see below), the process just 
lives on critical points; however on this scale stable critical points (that is, local minima of $V$) are absorbing for the dynamics (once visited, the system sticks there).
When observed on even larger  time scales, typically exponentially large in $\beta$ \cite[Chap. 6.]{FW}, even among the local minima distinctions become observable: 
on the long run, rare but sufficiently large fluctuations may occur that allow the process to climb the mountain pass which leads to another local minimum. 
This tunneling or metastable transition among local minima favors the passage from a minimum point to a deeper, energetically more convenient minimum point. 
Thus, on the long run, the deeper the local minimum, the more time the process will spend around it.

The reversible diffusion model \eqref{e:diffusionprocess} is a paradigmatic model for metastable phenomena, that show up in the dynamical behavior of a large variety of complex real world systems, and can be regarded as a mark of dynamical phase transitions. It is often a detailed theoretical and numerical analysis of metastability that allows for a proper upscaling of microscopic models - defined at the atomic or molecular level -  to macroscopic descriptions catching experimental observations in materials science, biology and chemistry.
 We refer to Kramers' influential work~\cite{Kr}, to~\cite{HTB} and to~\cite{Be,Le, DLPN} for recent brief reviews on some of the mathematical techniques used in the analysis of metastability. We also refer specifically to \cite{Mat, OV} and references therein for some classical rigorous results concerning the heuristic picture described above.

 \
 
One expects the metastable behavior to be encoded somehow in the asymptotic behavior of the Fisher information. We discuss below why $\Gamma$-convergence is the natural tool to characterize this asymptotic behavior, in particular in the context of metastability and convergence of multi-scale reversible dynamics. However, for the sake of readability, we first shortly describe our main result in Section~\ref{ss:intromain} below, then informally explain how $\Gamma$-convergence can be used as a solid theoretical framework for metastability in Section~\ref{ss:kifer}, in particular in the context of large deviations of the occupation measure. We quickly review the applications of $\Gamma$-convergence in multi-scale gradient flows in Section~\ref{ss:multigrad} (see also \cite{PSV} where  $\Gamma$-convergence is exploited to upscale an essentially one-dimensional metastable Fokker-Planck equation). We recall the definition of $\Gamma$-convergence in Section~\ref{ss:gammaconv} below, and refer to \cite{Br} for a systematic treatment.

\subsection{Informal description of the main result}
\label{ss:intromain}
In this paper a full expansion in the sense of $\Gamma$-convergence of the Fisher information is established, which indeed features the complete cascade of metastable scales as informally described above. Using standard techniques, it is not hard to see that, under mild assumptions on the potential $V$, the rescaled Fisher information 
\bel{e:rescaledFisher}{
I_\beta:=        \tfrac{1}{\beta^{2}}     \mc I(\cdot | e^{-\beta V} dx)  
}
$\Gamma$-converges (in the weak topology of probabilities) as $\beta\to \infty$ to the linear functional
\[I(\mu):=\tfrac 12\int d\mu(x) |\nabla V|^2(x).\]    
On the dynamical level this corresponds to the convergence of the diffusion \eqref{e:fokkerplanckev}-\eqref{e:diffusionprocess} to the deterministic transport along the gradient of $V$. 
Note that $Z_\beta^{-1}e^{-\beta V} dx$ is the unique minimizer of $I_\beta$, provided  $Z_\beta=\exp(-\beta V)<\infty$. 
On the other hand, if $V$ admits several critical points, $I(\mu)$ vanishes on any probability that is concentrated on such points.

This suggests that one may obtain a non-trivial $\Gamma$-limit also when multiplying $I_\beta$ by a suitable diverging constant, so that, according to the heuristic
picture described above, the new limiting functional vanishes only on probability measures concentrated on local minima. This is indeed the case, and $\beta I_\beta$ also $\Gamma$-converges to a functional $J$ with the mentioned properties
(see Theorem~\ref{t:main} for its explicit characterization). 

To capture the exponential scales associated with tunneling between local minima we 
 iterate the same procedure: assuming that there is a unique global minimum point $x_0$ of $V$
 and labelling by $\{ x_1, \dots, x_n\}$ the other local minimum points of $V$, we
 investigate for each $k=1, \dots, n$ the $\Gamma$-limit of 
$\beta e^{\beta W_k} I_\beta$, where $W_k$ is the mountain pass barrier which separates $x_k$ from 
a deeper minimum (see Section~\ref{s:main} for precise defintions).
 Under genericity assumptions, we prove that  for each $k=1, \dots, n$ also $\beta e^{\beta W_k}I_\beta$ $\Gamma$-converges to some $J_k$, which
 can again be explicitly characterized  (see Theorem~\ref{t:main}) and involves the prefactors already appearing in the famous Kramers formula
 for metastable critical times \cite{Kr} (see also the $\Gamma-$convergence result in \cite{PSV}). Since 
 $J_k(\mu)=0$ for all $k$'s iff $\mu=\delta_{x_0}$, this amounts to a full expansion of $I_\beta$ by $\Gamma$-convergence
 (see \cite[Chap.~1.10]{Br} and Section~\ref{s:main} of this paper). In other terms, as far as infima over closed and open sets are concerned (that is, in the sense of $\Gamma$-convergence)
\bel{e:sviluppo1}{
I_\beta(\mu)\sim I(\mu)+ \tfrac{1}{\beta} J(\mu)+\sum_{k=1}^n \tfrac{1}\beta\,e^{-\beta\,W_k}J_k(\mu)   .  
}

As a consequence of~\eqref{e:sviluppo1} and the Donsker-Varadhan Large Deviations Principle \cite{DV} 
we infer that, in the sense of large deviations, the solution to \eqref{e:diffusionprocess} satisfies  (see Corollary~\ref{c:ld})
\bel{e:ld1bis}{
\bb P\Big(\tfrac{1}{T}\! \int_0^T\!\!\!\!\!ds f(X_s) \sim \int \! d\mu(x) f(x), \, \forall f\Big)
\sim 
e^{-\beta T\left(I(\mu)+\tfrac{1}{\beta} J(\mu)+\sum_{k=1}^n \frac{e^{-\beta\,W_k}}{\beta} J_k(\mu) \right)}
.
}
That is, we characterize the sharp asymptotic of the left hand side for every $\mu\in \mc P(\bb R^d)$.

To prove our main result~\eqref{e:sviluppo1} we take a spectral point of view and exploit results which were mainly developed in the
context of the semiclassical spectral analysis of
Schr\"{o}dinger operators. Indeed, as highlighted by the expression on the right hand side of~\eqref{e:relFisher}, the Fisher information can be seen as a Dirichlet form on $L^2(dm)$ and thus be studied from a spectral point of view by considering the corresponding selfadjoint operator.
 When $m_\beta = e^{-\beta V}dx$
the latter is given by the diffusion operator
 \bel{e:generatorL}{       L_\beta  =     \Delta     -    \beta \nabla V \cdot \nabla        .  }
It is well-known that a unitary transformation, sometimes called ground state transformation,  turns  the generator into a Schr\"{o}dinger operator
acting now on the flat space $L^2(dx)$. More precisely,     
  \bel{e:Witten}{      - e^{ -  \tfrac 12\beta V}     L_\beta e^{ \tfrac 12\beta V}    =       -\Delta     +     \frac{\beta^2}{4}|\nabla V|^2     -\frac{\beta}2 \Delta V        .  
    }
The latter operator is a specific Schr\"{o}dinger operator.
This operator coincides with the restriction on the level of functions of the 
Witten Laplacian, an operator acting on the full algebra of differential forms, considered by Witten in his celebrated paper~\cite{Witten}. 
Note that the small noise limit $\beta\to 0$  turns now into a problem in semiclassical analysis of Schr\"{o}dinger operators.
In this paper we use two important results, established in this framework: the approximation of the low-lying spectrum
of Schr\"{o}dinger operators via harmonic oscillators sitting at the bottom of the wells \cite{Si},
 and the fine analysis of the splitting of the exponentially small eigenvalues provided in \cite{HKN}. 
For simplicity we restrict here to the case of $\bb R^d$ or of a compact Riemannian manifold as state space, but our arguments could be 
adapted without much difficulty to the case of a bounded domain with reflecting (Neumann) boundary conditions, as considered e.g. in \cite{PSV},
by using~\cite{LP} instead of~\cite{HKN}.

\subsection{Metastable dynamical systems}
\label{ss:kifer}
Here we informally describe some motivations to approach metastability via $\Gamma$-convergence of the Fisher information. By the Birkhoff ergodic theorem, time-averages of an observable of an ergodic (deterministic or random) 
dynamical system converge in the long time limit to the average of the same observable with respect to the invariant measure $m$ of the system. Assume that we have a family of ergodic systems indexed by some parameter
$\beta$, that for the sake of simplicity we may take as a real number.   Under very weak and general conditions, see e.g.\ \cite{Ki}, dynamical systems also satisfy a long-time large deviation principle, that we informally write for each \emph{fixed} $\beta$ as
\bel{e:lddef}{
\bb P_\beta \left(\tfrac{1}{T} \int_0^T\!\!\!\!\!ds f(X_s) \sim \int \! d\mu(x) f(x), \, \forall f\right)\simeq \exp\left(-T E_\beta(\mu)\right),
\quad T\gg 1   ,
} 
where the \emph{rate} $E_\beta(\mu)\ge 0$  is a positive real valued function defined on probability measures\footnote{Note that \eqref{e:lddef} makes sense both for random and deterministic systems, as in the latter case the initial distribution induces a probability measure on the trajectories of the system. In other words, for the deterministic systems, the left hand side of \eqref{e:lddef} should be interpreted as the measure of initial data such that $\int_0^T f(X_s)ds \sim \int d\mu(x) f(x)$.
}.
For each fixed $\beta$, we assume that the system has good ergodic properties, i.e. $E_\beta(\mu)=0$ iff $\mu=m_\beta$ is the invariant measure\footnote{For the sake of simplicity we are assuming that at time $0$ we start with distribution $m_\beta$. This assumption is usually not needed in the random case, but crucial for deterministic systems.}, so that \eqref{e:lddef} can be interpreted as a quantitative version of the Birkhoff theorem.

Suppose now that, informally speaking, a limiting behavior takes place as we move the parameter $\beta$ to some limit, say $\beta\to \infty$. Then the invariant measure is expected to converge to some limiting measure, $m_\beta\to m$. Similarly one expects concentration of time averages to take place in the parameter $\beta$ as well, namely
\bel{e:lddefbeta}{
\bb P_\beta\left(\tfrac{1}{T} \int_0^T\!\!\!\!\!ds f(X_s) \sim \int \! d\mu(x) f(x), \, \forall f\right)
\simeq \exp\left(-T\,a_\beta\, I(\mu)\right),
\quad T\gg 1,\beta\gg 1
}
where $a_\beta$ is a sequence of real numbers converging to $+\infty$. It is a general fact, see e.g.\ \cite{Ma}, that the functional $I$ in \eqref{e:lddefbeta} is the $\Gamma$-limit of $\tfrac{1}{a_\beta}E_\beta$. If ergodic properties of the system hold uniformly in $\beta$, one would expect that $I(\mu)=0$ iff $\mu=m=\lim_\beta m_\beta$.

On the other hand, we say that the system exhibits a \emph{metastable behavior} (on the scale $(a_\beta)$) in the limit $\beta\to\infty$, if the function $I(\mu)=\Glim_\beta \tfrac{1}{a_\beta}E_\beta$, vanishes on a set strictly larger than $\{m\}$. In other words, if there is a 'small-but-not-so-small' probability that, for $T$ large and $\beta$ large, the time average of observables does not behave as the spacial average 
with respect to the limiting invariant measure. In such a case, \eqref{e:lddefbeta} states in particular that one needs to consider time scales much longer than $a_\beta$ to observe convergence of the system to the invariant measure. If the system features a metastable behavior, \eqref{e:lddefbeta} is not completely satisfactory. Indeed, one knows a priori that the left hand side should vanish if $\mu\neq m$ (as we sent $T\to \infty$ before $\beta$, and assumed $m_\beta\to m$), but the 
right hand side does not catch the sharp asymptotic if $I(\mu)=0$ (and $\mu\neq m$). In other words, if $\mu\neq m$ and $I(\mu)=0$ \eqref{e:lddefbeta} just states that the left hand side vanishes on a slower scale than $\exp(-a_\beta T)$ but does not quantify this scale.

In this case, one would expect a further non-trivial expansion in the right hand side of \eqref{e:lddefbeta}, say for $T\gg 1,\beta\gg 1$
\bel{e:lddefbeta2}{
\bb P_\beta \left(\tfrac{1}{T}\int_0^T\!\!\!\!\!ds f(X_s) \sim \int \! d\mu(x) f(x), \, \forall f\right)\simeq \exp\left(-T\,a_\beta\,\left( I(\mu)+ \tfrac{1}{b_\beta}J(\mu) \right)\right),
}
where $b_\beta\to\infty$ and $J(\mu)$ is such that $J(m)=0$, $J(\mu)=\infty$ if $I(\mu)>0$, and thus $J(\mu)$ gives the asymptotic value of the left hand side when $I(\mu)=0$ and $\mu\neq m$. Actually, it may happen that for such $\mu$'s, the left hand side in \eqref{e:lddefbeta2} vanishes at different rates, so that a further expansion of the exponent in the right hand side is necessary, until the exponential behavior of the left hand side at each and every point in $\{I(\mu)=0\}$ is characterized, and a complete quantitative version of the ergodic convergence of averaged observables is recovered.

The discussion informally carried over above can be made very precise, since both large deviations and $\Gamma$-convergence always hold along subsequences, and thus \eqref{e:lddef} can actually be used as a rigorous definition of $E_\beta$, and the existence of a non-trivial development by $\Gamma$-convergence \cite[Chap.~1.10]{Br} (or equivalently of multiple large deviations principles\footnote{This should not be confused with what is called \emph{large and moderate deviations} in the literature, which is not related to metastability. Here the observables are fixed once and for all, and not rescaled. And yet they feature multiple large deviations principles.
} as in \eqref{e:lddefbeta2}) as a general definition of metastability. However we refrain from giving a too abstract formulation of these statements that are not instrumental to our results below. 

\

\subsection{Multiscale gradient flows}
\label{ss:multigrad}
The relative entropy $H_\beta(\mu)$ of the probability measure $\mu$ with respect to the reference measure $m=e^{-\beta V}dx$ is defined as
\bel{e:relent}{
H_\beta(\mu):=\int dm\,\varrho \log \varrho, \qquad \mu=\varrho m
}
It is a well-known fact \cite{JKO} that the Fokker--Planck equation 
\bel{e:fke}{
\partial_t u =\Delta u +\beta \mathrm{div}(u\nabla V )
}
can be written as a gradient flow of the relative entropy $H_\beta$ with respect to the $2$-Wasserstein distance on probability measures on $\bb R^d$. Namely a curve $(\mu_\beta(t))$ of probability measures with bounded second moments on $\bb R^d$ satisfies \eqref{e:fke} iff
\bel{e:gf}{
H_\beta(\mu_\beta(0))\ge H_\beta (\mu_\beta(t))+\tfrac 12 \int_0^t \|\partial_s \mu_\beta(s)\|^2 ds +\tfrac{1}{2} \int_0^t |\nabla_{\mc W_2} H_\beta |^2(\mu_\beta(s))ds
}
where $\|\cdot\|$ is the tangent norm on the  $2$-Wasserstein space and, most importantly for us, the metric gradient $|\nabla_{\mc W_2} H_\beta|^2$ coincides with $\beta I_\beta$. Following the approach in \cite{SS}, and dividing both sides of \eqref{e:gf} by $\beta$, one wants to pass to the limit $\beta \to \infty$. Then, using the $\Gamma$-limit result $I_\beta \to I$ in Theorem~\ref{t:main}, one easily gathers that any limit point $(\mu(t))$ of $(\mu_\beta(t))$ satisfies
\bel{e:gf2}{
\int V d\mu(0) \ge \int V d\mu(t) +\tfrac 12 \int_0^t \|\partial_s \mu(s)\|^2 ds +\tfrac{1}{2} \int_0^t \int |\nabla V|^2 d\mu(s)ds,
}
provided the initial data satisfies $\tfrac{1}{\beta}H_\beta(\mu_\beta(0))\to  \int V d\mu(0)$. In other words, one recovers the simple fact that the solution $u_\beta$ to \eqref{e:fke} satisfies $u_\beta(t/\beta,x)\to u(t,x)$, where $u$ solves the transport equation
\bel{e:fke2}{
\partial_t u = \mathrm{div}(u\nabla V )  .
}
Of course, if the inital data concentrates on critical points of $V$, \eqref{e:fke2} becomes trivial, namely $\partial_t u=0$. Then the development by $\Gamma$-convergence gives the sharp asymptotic of $u_\beta(t/\beta^2)$ exactly in these cases. It turns out that, when the initial condition concentrates on local minima, even this limit is trivial, and still by the $\Gamma$-development one can establish the limit of $u_\beta(t/(\beta e^{\beta W_k}),x)$ where the $W_k$ are appropriate constants defined in \ref{a:5} below.
We do not pursue these problems here, since we plan a detailed study in the non-reversible  case (namely when $\nabla V$ in \eqref{e:fke2} is replaced by a generic vector field) in a follow-up paper. See \cite{PSV} for a detailed discussion on a similar model.
\subsection{Plan of the paper}
\label{ss:plan}
In Section~\ref{s:main} we introduce some definitions and state the main result.
In Section~\ref{s:fisher}, we shortly review some basic properties of the Fisher information functional. In Section~\ref{s:generator} we 
consider suitable quasimodes associated to the generator $L_\beta$. In Section~\ref{s:gammauno} we prove the $\Gamma$-convergence and equicoercivity results for the Fisher information.  In Section~\ref{s:gammadue} and Section~\ref{s:gammatre} we prove the development by $\Gamma$-convergence of the Fisher information 
respectively under inverse power and exponential rescaling.

\section{Main result}
\label{s:main}

\subsection{Basic notation}
\label{ss:notation}
Hereafter $(M,g)$ is a smooth $d$-dimensional Riemannian manifold without boundary and with metric tensor $g$. We assume that either $M$ is compact or that $M=\bb R^d$ and $g$ is the canonical Euclidean tensor. $\Omega$ is the set of smooth, compactly supported $1$-forms over $M$. For $\psi \in \Omega$, $\psi^2$ is 
understood as $\langle\psi,\psi\rangle_g$, while $\nabla$, $\nabla \cdot$, $\hess$ and $\Delta$ are the covariant gradient, the divergence, the Hessian
and the (negative) Laplace-Beltrami operators on $M$. $\mc P(M)$ is the set of Borel probability measures on $M$, and it is naturally equipped with the weak topology of probability measures. If $M$ is compact this is the weakest topology such that the map $\mu\mapsto \int d\mu\,f$ is continuous for all $f\in C(M)$. If $M$ is not compact, the definition is slightly more involved, as one should require $f$ to be bounded and uniformly continuous with respect to any totally bounded distance on $M$ (this topology is independent of such a distance), see \cite[Chap.~3.1]{Str}. In any case, $\mc P(M)$ is a Polish space (that is a completely metrizable, separable topological space), and it is compact if $M$ is compact.

\subsection{$\Gamma$-convergence}
\label{ss:gammaconv}
In this section we briefly recall the basic definitions related to $\Gamma$-convergence, see \cite[Chap.~1]{Br}. For $X$ a Polish space (we will always consider $X=\mc P(M)$) and $(H_\beta)$ a family of lower semicontinuous functions on $X$ indexed by the directed parameter $\beta$ (we will always consider $\beta>0$), one defines
\bel{e:gammadef}{
(\Gliminf_{\beta} H_\beta)(x):=  \inf \left\{ \varliminf_{\beta} H_\beta(x_\beta),\,
                   \{x_\beta\} \subset X\,:\: x_\beta \to x \right\}    ,
\\
\big(\Glimsup_{\beta} H_\beta \big)\, (x):=  
\inf \left\{\varlimsup_{\beta}  H_\beta(x_\beta),\,
                   \{x_\beta\} \subset X\,:\: x_\beta \to x \right\}.
}
Whenever $\Gliminf_\beta H_\beta =\Glimsup_\beta H_\beta=:H$ we say
that $H_\beta$ $\Gamma$-converges to $H$ in $X$. Equivalently,
$H_\beta$ $\Gamma$-converges to $H$ iff for each $x\in X$:
\begin{itemize}
\item[\rm{--}]{for any sequence $x_\beta\to x$ it holds
$\varliminf_\beta H_\beta(x_\beta)\ge H(x)$;}
\item[\rm{--}]{ there exists a sequence $x_\beta\to x$ such that
$\varlimsup_\beta H_\beta(x_\beta)\le H(x)$.}
\end{itemize}
$(H_\beta)$ is \emph{equicoercive} if for each $M\in \bb R$ the set $  \left\{\varlimsup_\beta H_\beta \le M \right\}$ is precompact in $X$.  If $H_\beta$ is equicoercive and $\Gamma$-converges to $H$, then $A:=\mathrm{argmin}(H)$ contains each limit point of $\mathrm{argmin}(H_\beta)$. Denote by $A_0$ the set of such limit points. If $A\setminus A_0$
 is nonempty, there exists a diverging sequence $(a_\beta^{(1)})$ such that the functional
\bel{e:huno}{
H_\beta^{(1)}:= a_\beta^{(1)}(H_\beta-\inf_{x\in X} H_\beta(x))
}
admits a non-trivial (that it having somewhere a value different from $0$ and $\infty$) $\Gamma$-liminf. 
Note that $H^{(1)}_\beta$ inherits from $H_\beta$ the property of being equicoercive, and that $(\Glimsup_\beta H^{(1)}_\beta)(x)=(\Gliminf_\beta H^{(1)}_\beta)(x)=0$ if $x\in A_0$ and $(\Glimsup_\beta H^{(1)}_\beta)(x)=(\Gliminf_\beta H^{(1)}_\beta)(x) = +\infty$ if $x\not\in A$. If $H_\beta^{(1)}$ admits a $\Gamma$-limit $H^{(1)}$ then we say that the development by $\Gamma$-convergence
\bel{e:gdev}{
H_\beta\sim H+\tfrac{1}{a_\beta^{(1)}}H^{(1)}
}
holds. Iterating the procedure with sequences such that $a_\beta^{(k)}/a_\beta^{(k+1)}\to 0$, we say that the development by $\Gamma$-convergence is \emph{full} if it holds
\bel{e:gdevco}{
H_\beta\sim H+\sum_{k} \tfrac{1}{a_\beta^{(k)}}H^{(k)}
}
and for each $x\in A\setminus A_0$ there exists $k$ such that $H^{(k)}(x)\in (0,\infty)$.

Equicoercivity and $\Gamma$-convergence of a sequence
$(H_\beta)$ imply an upper bound of infima over open sets, and
a lower bound of infima over closed sets, see e.g.\
\cite[Prop.~1.18]{Br}, and it is a relevant notion of
variational convergence for the problems discussed in the introduction. A full development by $\Gamma$-convergence then gives a sharp asymptotic of each of such infima.

\subsection{Assumptions on the potential}
\label{ss:assumptions}
The \emph{potential} $V$ is a given real-valued function and we consider the following assumptions.
\begin{enumerate}[label=\textbf{A.\arabic*}]
\item {\label{a:1}
	 $V\in C^\infty(M)$ is a Morse function. Namely the Hessian of $V$ is non-degenerate at any critical point of $V$.
}
\item {\label{a:2}
In the case $M=\bb R^d$, it holds for some $\beta_0 \ge 0$
\bel{e:a2}{
	\lim_{x\to\infty} \inf_{\beta\ge \beta_0} (\frac14|\nabla V|^2-\frac{1}{2\beta} \Delta V)(x)=+\infty.
}
}
\item {\label{a:3} In the case $M=\bb R^d$, it holds for some $\beta_0\ge 0$ and  all $\beta\ge \beta_0$
\bel{e:a2b}{ 
	Z_\beta :=  \int e^{-\beta V(x)} dx <+\infty.
}
}
\end{enumerate}
\begin{remark}
\label{r:helff}
If $M=\bb R^d$, \ref{a:1}-\ref{a:3} imply that  $\lim_{|x|\to +\infty} V(x)=+\infty$ (see \cite[Proposition~2.2]{HKN}). Thus, in both cases of $M$ compact and $M=\bb R^d$, the set $\{V\le c\}$ is compact for all $c\ge \inf_{x\in M} V(x)$.
\end{remark}
\begin{remark}
Assumption~\ref{a:2} says that, after the ground state transformation of the diffusion generator $L_\beta$ and division by $\beta^2$ (see~\eqref{e:generatorL},\eqref{e:Witten}), the Schr\"{o}dinger potential grows to infinity at infinity, uniformly in $\beta$.
\end{remark}
Let $\wp$ be the set of critical points of $V$, namely $z\in \wp$ iff $\nabla V(z)=0$.
By  \ref{a:1}-\ref{a:2}, $\wp$ is finite.
Let  $\wp_0\subset \wp$ be the set of local minima of $V$ and 
define $W \colon \wp_0 \to (0, \infty]$ as
\bel{e:fw2}
{
W(x): =\inf_{y\neq x\,:\: V(y)\le V(x)}    \inf_{\gamma\in \Gamma(x,y)} \sup_{t\in [0,1]} V(\gamma(t))-V(x)    ,  
}
where $\Gamma(x,y)$ is the set of continuous curves connecting $x$ and $y$ in time $1$. 
In other words $W(x)$ is the lowest mountain pass to climb when going from $x$ to a deeper minimum
of $V$. 

\begin{enumerate}[label=\textbf{A.\arabic*}]
	\addtocounter{enumi}{3}

\item {\label{a:4} 
 For each $x\in \wp_0$ such that $W(x)<+\infty$ there exists a unique point $\hat x\in \wp$ such that the two following conditions hold:
\begin{itemize}
\item[(i)]{ $V(\hat x)=V(x)+W(x)$.}
\item[(ii)]{ $\hat x$ and $x$ lie in the same connected component of the compact set $\left\{y\in M\,:\: V(y)\le V(x)+W(x) \right\}$.}
\end{itemize}
In other words, there is a unique saddle point $\hat x$ such that all the optimal curves $\gamma$ in the variational problem \eqref{e:fw2} pass through $\hat x$.
}

\item {\label{a:5}
	$W(x)\neq W(y)$ whenever $x\neq y$, $x,\,y\in \wp_0$.
}
\end{enumerate}	
By \ref{a:5}  we can label $\wp_0=\{x_0,\ldots,x_n\}$ by requiring 
 \bel{e:order}{
W_n<\ldots< W_1 <W_0=+\infty    ,
 }
where we use the shorthand notation $W_i := W(x_i)$.  
Note that $x_0$ is the unique global minimizer of $V$ and  $\hat x_i$
exists for $i\neq 0$ by \ref{a:4}.

\subsection{The $\Gamma$-development theorem}
\label{ss:theorem}
For $\beta\ge \beta_0$ define the reference measure $m_\beta$ on $M$ as 
\bel{e:refm}{
dm_\beta(x)=e^{-\beta V(x)} dx,
}
where $dx$ is the Riemannian volume on $M$. The \emph{Fisher information} at inverse temperature $\beta>0$ is the functional (see Definition~\ref{d:dvmore} and \eqref{e:ibeta2} for 
further details)
\bel{e:ibeta}
{
& I_\beta \colon \mc P(M)\to [0,+\infty]   , 
\\
& I_\beta(\mu):= 
\begin{cases}
\frac{1}{2\beta^2} \int dm_\beta \frac{|\nabla \varrho|^2}{\varrho}& 
\text{if $\frac{d\mu}{dm_\beta}=\varrho$;}
\\
+\infty & \text{otherwise}. 
\end{cases}
}
We are interested in the variational convergence of $I_\beta$ in the low temperature regime, namely when $\beta\to \infty$. In order to describe our main result we need to define further notation.

For $z\in \wp$, we denote by $(\xi_i(z))_{i=1,\ldots,d}$ the eigenvalues of $\hess V(z)$, labelled by ordering $\xi_1(z)\le \xi_2(z)\le \ldots\le \xi_d(z)$. Then define
\bel{e:zeta}{
& \zeta\colon M\to [0,+\infty]
\\
&\zeta(z):=
\begin{cases}
 \sum_{i=1}^d |\xi_i(z)| - \xi_i(z)=2 \sum_{i\,:\: \xi_i(z)<0}|\xi_i(z)| & \text{if $z\in \wp$;}
 \\
 +\infty &\text{otherwise.}
 \end{cases}
}
For $k\ge 1$ and $x_k\in \wp_0$, the saddle point $\hat x_k$ defined in \ref{a:4} is easily seen to satisfy $\xi_1(\hat x_k)<0$ and $\xi_i(\hat x_k)>0$ for $i\ge 2$. Recalling the labelling \eqref{e:order}  we define for $k=1,\ldots,n$
 \bel{e:coeffk}{
& \eta_k\colon M\to [0,+\infty]
\\
& \eta_k(x):=
\begin{cases}
0 & \text{if $x\in \{x_0,\ldots,\,x_{k-1}\}$;}
\\
 \frac{|\xi_1(\hat x_k)|}{\pi}\sqrt{ \frac{\det (\hess V)(x_k)}{|\det (\hess V)(\hat x_k)|}} =\sqrt{ \frac{|\xi_1(\hat x_k)| \prod_{i=1}^d \xi_i(x_k) }{\pi^2 \prod_{i=2}^d  \xi_i(\hat x_k)}} & \text{if $x=x_k$;}
 \\
 +\infty &\text{otherwise.}
 \end{cases}
}
The functions $\zeta$ and $\eta_k$ are lower semicontinuous and coercive. Their expression 
naturally appears when studying the asymptotic behaviour of eigenvalues of $L_\beta$ (see Theorem~\ref{t:simon} and
~\ref{t:HKN} for this interpretation). 
Our main result is the following.
\begin{theorem}
\label{t:main}
Assume \ref{a:1}-\ref{a:5}. Then $(I_\beta)$ is an equicoercive sequence of lower semicontinuous functionals. Moreover
\bel{e:gc1}
{
\Glim_\beta I_\beta= I ,
}
\bel{e:gc2}{
\Glim_\beta \beta I_\beta =J,
}
\bel{e:gc3}
{
\Glim_\beta (\beta\,e^{\beta W_k} I_\beta)= J_k \qquad    \text{ for } k=1,\dots,n ,
}
where the lower semicontinuous, coercive functionals $I,\,J,\,J_k\colon \mc P(M)\to [0,+\infty]$ are defined as
\bel{e:i}
{
I(\mu):=    \tfrac 12 \int d\mu(x)\,\left|\nabla V\right|^2(x).
}
\bel{e:j}
{
J(\mu)=\int d\mu(x) \,\zeta(x)=
\begin{cases}
\sum_{y\in \wp} \alpha_y \zeta(y)
			& \text{if $\mu=\sum_{y\in \wp} \alpha_y \delta_{y}$;}
\\
+\infty & \text{otherwise.}
\end{cases}
}
\bel{e:jk}
{
J_k(\mu)=\int d\mu(x)\,\eta_k(x)
=
\begin{cases}
\alpha_k \,\eta_k(x_k) & \text{if $\mu=\sum_{j=0}^k \alpha_j\delta_{x_j}$;}
\\
+\infty & \text{otherwise.}
\end{cases}
}
\end{theorem}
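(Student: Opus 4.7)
The proof rests on the Witten conjugation: writing $d\mu = \phi^2\,dx$ with $\phi \ge 0$,
\bel{e:pl1}{
\tfrac{\beta^2}{2}\, I_\beta(\mu) = \int \bigl(|\nabla \phi|^2 + U_\beta\,\phi^2\bigr)\,dx,
\qquad U_\beta = \tfrac{\beta^2}{4}|\nabla V|^2 - \tfrac{\beta}{2}\Delta V,
}
i.e.\ $\tfrac{\beta^2}{2}I_\beta(\mu) = \langle \phi, H_\beta \phi\rangle$ with $H_\beta := -\Delta + U_\beta$ the scalar Witten Laplacian of~\eqref{e:Witten}. This transfers each $\Gamma$-bound to a spectral estimate on $H_\beta$, for which Theorem~\ref{t:simon} (harmonic approximation) and Theorem~\ref{t:HKN} (exponentially small eigenvalues) supply the two essential inputs. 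Equicoercivity is immediate from \ref{a:2}: if $I_\beta(\mu_\beta)\le M$ then $\int U_\beta\,d\mu_\beta \le M\beta^2$, whence tightness of $(\mu_\beta)$ in $\mc P(M)$.

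For \eqref{e:gc1} I expand~\eqref{e:pl1} as $\beta^2 I_\beta(\mu) = 2\|\nabla\phi\|^2 + \tfrac{\beta^2}{2}\int|\nabla V|^2\,d\mu - \beta\int \Delta V\,d\mu$. The $\Gamma$-liminf then follows from weak lower semicontinuity of $\mu\mapsto \int|\nabla V|^2\,d\mu$, since the last two terms are $o(1)$ under a uniform bound on $I_\beta(\mu_\beta)$. The $\Gamma$-limsup is constructed by mollifying $\mu$ to $\mu^\eta = \phi_\eta^2\,dx$, checking that $I_\beta(\mu^\eta) \to \tfrac12\int|\nabla V|^2\,d\mu^\eta$ as $\beta\to\infty$, and diagonalising in $(\beta,\eta)$.

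For \eqref{e:gc2}, $\beta I_\beta(\mu_\beta) \le M$ forces $\int|\nabla V|^2\,d\mu_\beta = O(1/\beta)$, so every limit point is supported on the finite set $\wp$; writing $\mu = \sum_{y\in\wp}\alpha_y\delta_y$ I apply an IMS partition of unity $\{\chi_y^2\}_{y\in\wp}\cup \{\chi_\infty^2\}$ to decompose
\bel{e:pl2}{
\langle\phi_\beta, H_\beta\phi_\beta\rangle \,\ge\, \sum_{y\in\wp}\lambda_y^\beta\,\|\chi_y \phi_\beta\|^2 + \lambda_\infty^\beta\,\|\chi_\infty\phi_\beta\|^2 - o(\beta),
}
where Theorem~\ref{t:simon} yields $\lambda_y^\beta = \tfrac{\beta}{2}\zeta(y) + o(\beta)$ and $\lambda_\infty^\beta \gtrsim \beta^2$. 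Since $\|\chi_y \phi_\beta\|^2 = \int \chi_y^2\,d\mu_\beta \to \alpha_y$, dividing by $\beta/2$ gives $\liminf_\beta \beta I_\beta(\mu_\beta) \ge \sum_y \alpha_y \zeta(y)$. The $\Gamma$-limsup is built by superposing, with weights $\sqrt{\alpha_y}$, the Gaussian ground states of the local harmonic approximation of $H_\beta$ at each $y\in\wp$ (these factorise along the eigendirections of $\hess V(y)$ with energy $\tfrac{\beta}{2}(|\xi_i|-\xi_i)$ in each direction, summing to $\tfrac{\beta}{2}\zeta(y)$).

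The third limit \eqref{e:gc3} is the most delicate. A uniform bound on $\beta e^{\beta W_k} I_\beta(\mu_\beta)$ forces the mass, past the two previous scales, onto the local minima $x_j$ with $W_j \ge W_k$, i.e.\ onto $\{x_0,\ldots,x_k\}$. For the liminf I project $\phi_\beta$ onto the $(k+1)$-dimensional quasi-kernel of $H_\beta$ spanned by ground-state quasimodes at $x_0,\ldots,x_k$: the transverse spectrum of $H_\beta$ is of order $\beta$, so any component orthogonal to this quasi-kernel costs far more than $e^{-\beta W_k}$ and can be discarded. On the quasi-kernel, Theorem~\ref{t:HKN} identifies $H_\beta$, up to $(1+o(1))$ factors, with a diagonal matrix whose $j$-th entry is asymptotic to $\tfrac{\beta}{2}\eta_j(x_j)e^{-\beta W_j}$, delivering the sharp lower bound $\alpha_k \eta_k(x_k)$ because $e^{-\beta W_j}\ll e^{-\beta W_k}$ for $j < k$. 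The limsup is furnished by the HKN metastable quasimodes themselves, renormalised to deposit mass $\alpha_j$ at each $x_j$, $j\le k$. The principal obstacle is precisely this final liminf: one must simultaneously exclude mass leakage to saddle points (forbidden by the second scale), tolerate mass at deeper minima $x_j$, $j<k$ (permitted but only at a strictly smaller exponential rate), and recover the exact Kramers prefactors $\eta_k$ --- this is exactly what the fine eigenvalue/eigenfunction asymptotics of~\cite{HKN} provide.
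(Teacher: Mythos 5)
Your overall strategy coincides with the paper's: conjugate to the scalar Witten Laplacian, use Theorem~\ref{t:simon} for the $O(\beta)$-scale and Theorem~\ref{t:HKN} for the exponentially small scale, and work with quasimodes localized at critical points. The two genuine methodological departures are in the proofs of~\eqref{e:gc1} and~\eqref{e:gc2}. For the $\Gamma$-limsup in~\eqref{e:gc1} you mollify $\mu$ and diagonalise in $(\beta,\eta)$; the paper instead builds an explicit Gaussian-type recovery sequence $\mu_{\beta,\bar x}=C_\beta^{-1}e^{-2\beta U}dx$ for each Dirac mass, then uses convexity of $I_\beta$ and linearity of $I$ for general $\mu$ (which directly handles the $M=\bb R^d$ case without uniform-integrability side conditions). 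For the $\Gamma$-liminf in~\eqref{e:gc2} you use an IMS partition of unity $\{\chi_y^2\}\cup\{\chi_\infty^2\}$ with Dirichlet bracketing near each $y\in\wp$; the paper instead constructs a global orthonormal basis of quasimodes $(\Psi_{\beta,z,\undn})$ (Proposition~\ref{p:vittorini}), decomposes $h_\beta$ spectrally as in~\eqref{e:hpsi}, and applies the spectral Markov inequality in~\eqref{e:dirgc1}. Both arguments rest on Theorem~\ref{t:simon}, but the paper's basis is also the engine behind the matching of weights $\alpha_{\beta,z,\Lambda}\to\alpha_z$ in~\eqref{e:alphalamz}--\eqref{e:mutest3}, which your IMS argument reproduces more directly via $\|\chi_y\phi_\beta\|^2\to\alpha_y$. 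Your IMS error term should be noted as $O(1)$ (hence $o(\beta)$) because $\phi_\beta^2\,dx$ is a probability.

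There is a genuine misstep in the $\Gamma$-liminf argument for~\eqref{e:gc3}. You project onto the $(k+1)$-dimensional quasi-kernel and claim the transverse spectrum of $H_\beta$ is ``of order $\beta$''. This is only true for $k=n$: for $k<n$ the orthogonal complement of $\mathrm{span}\{\Phi_{\beta,0},\dots,\Phi_{\beta,k}\}$ still contains the eigenspaces of $\ell_{\beta,k+1},\dots,\ell_{\beta,n}$, which are \emph{exponentially small} (indeed $\ell_{\beta,k+1}\asymp e^{-\beta W_{k+1}}\to 0$), not $O(1)$ in $-\tfrac1\beta L_\beta$ units. What actually kills the transverse contribution is the strict ordering~\ref{a:5}: since $W_{k+1}<W_k$, $e^{\beta W_k}\ell_{\beta,k+1}\to+\infty$ by Theorem~\ref{t:HKN}, so the transverse cost in $\beta e^{\beta W_k}I_\beta$ still diverges — compare~\eqref{e:liminf1b}--\eqref{e:ilminf2} in the paper, where this divergence is exactly what forces $\|P_{\beta,k}h_\beta\|\to 0$. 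Your conclusion is correct, but the stated reason is wrong, and without explicitly invoking~\ref{a:5} and the rate of $\ell_{\beta,k+1}$ the argument does not close. Relatedly, the claim that a uniform bound on $\beta e^{\beta W_k}I_\beta(\mu_\beta)$ ``forces the mass onto $\{x_0,\dots,x_k\}$'' is not a consequence of the two previous $\Gamma$-scales alone; it needs the spectral decomposition~\eqref{e:cuc} together with Proposition~\ref{p:quasimodo} to identify the limit of $\mu_\beta$ with $\sum_{j\le k}\gamma_{\beta,j}^2\delta_{x_j}$ once the transverse component has been shown to vanish.
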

\begin{remark}
\label{r:gammadev}
It holds $\inf_{\mu} I(\mu)=\inf_{\mu} J(\mu)=\inf_{\mu} J_k(\mu)=0$. The number of minimizers of the functionals in the sequence $I,J,J_n,\dots, J_1$ decreases from $|\wp|$ to~$1$. More precisely we have
\begin{itemize} 
 \item $I(\mu)=0$ iff $J(\mu)<+\infty$,
 namely iff $\mu$ is concentrated on $\wp$. 
 \item $J(\mu)=0$ iff $J_n(\mu)<+\infty$, namely iff $\mu$ is concentrated on $\wp_0$. 
 \item for $k=1,\ldots,n$-1, $J_{k+1}(\mu)=0$ iff $J_k(\mu)<+\infty$, and finally $J_1(\mu)=0$ iff $\mu=\delta_{x_0}$ (that is the unique limit point of $m_\beta$, the minimizer of $I_\beta$). 
 \end{itemize}
 In other words, 
 $I_\beta$ admits the following full development by $\Gamma$-convergence, see \cite[Chap.~1.10]{Br},  as $\beta \to \infty$:
\bel{e:dgc}
{
I_\beta(\mu)\sim I(\mu)+ \frac{1}{\beta} J(\mu)+\sum_{k=1}^n \frac{1}\beta\,e^{-\beta\,W_k}J_k(\mu)  . 
}
In particular the 
 asymptotic behavior of infima of $I_\beta$ on closed and open subsets is characterized by a finite and non-zero leading order, see \cite[Theorem~1.18]{Br}.   \end{remark}
\begin{corollary}     
\label{c:ld}
Consider the diffusion process $X^\beta$ defined through
\bel{e:diffusion}{
&\dot X^\beta_t= - \nabla V(X^\beta_t) +\sqrt{2\beta^{-1}} \dot W_t 
\\
&X^\beta(0)=x_0 .
}
Equivalently, let $X^\beta$ be the Markov process with generator $\beta^{-1} \Delta-\nabla V \cdot \nabla$, and starting at $x_0\in M$. For $\beta,\,T>0$, consider the empirical measure
\bel{e:emp}{
\theta_{\beta,\,T}:=\frac{1}{T}\int_0^T \delta_{X^\beta_t}dt\in \mc P(M).
}
In other terms, $\theta_{\beta,T}$ is the random probability measure on $M$ such that for all test functions $\varphi$ it holds $\int d\theta_{\beta,T}(\varphi)=\tfrac{1}{T} \int_0^T \varphi(X^\beta_t)dt$. Then, in the sense of large deviations \cite{DZ}, and in the limit $T>>\beta\to +\infty$, $\theta_{\beta,T}$ satisfies (independently of $x_0$)
\bel{e:ld1}{
\bb P(\theta_{\beta,T}\sim \mu)\sim e^{-\beta T\left(I(\mu)+\tfrac{1}{\beta} J(\mu)+\sum_{k=1}^n \frac{e^{-\beta\,W_k}}{\beta} J_k(\mu) \right)}.
}
More precisely, for all $I$-regular (respectively $J$-regular and $J_k$-regular) sets $\mc A\subset \mc P(M)$ it holds
\bel{e:ld2}{
\lim_{\beta\to \infty}\lim_{T\to \infty}  \frac{1}{\beta \,T}\log \bb P(\theta_{\beta,T}\in \mc A)= -\inf_{\mu\in \mc A} I(\mu),
}
\bel{e:ld3}{
\lim_{\beta\to \infty} \lim_{T\to \infty} \frac{1}{T} \log \bb P(\theta_{\beta,T}\in \mc A)= -\inf_{\mu\in \mc A} J(\mu),
}
\bel{e:ld4}{
\lim_{\beta\to \infty} \lim_{ T\to \infty} \frac{e^{\beta W_k} }{T} \log \bb P(\theta_{\beta,T}\in \mc A)= -\inf_{\mu\in \mc A} J_k(\mu).
}
\end{corollary}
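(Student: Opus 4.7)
The plan is to carry out the $T\to\infty$ limit for each fixed $\beta$ via the Donsker--Varadhan LDP for reversible diffusions, and then pass to the $\beta\to\infty$ limit of the resulting infima using the $\Gamma$-convergence statements of Theorem~\ref{t:main}. Formulas \eqref{e:ld2}--\eqref{e:ld4} are three independent statements obtained by this two-step procedure with different normalizations; the display \eqref{e:ld1} is just a shorthand for them combined through the full development \eqref{e:dgc}.

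\textbf{Step 1} (Donsker--Varadhan at fixed $\beta$). For $\beta\ge \beta_0$ the process $X^\beta$ is reversible with respect to the normalized Gibbs measure $\pi_\beta=Z_\beta^{-1}m_\beta$, whose finiteness is ensured by \ref{a:3}; the coercivity assumption \ref{a:2} supplies the Lyapunov-type estimate needed to apply the Donsker--Varadhan theorem also in the noncompact case $M=\bb R^d$. Hence $\theta_{\beta,T}$ satisfies, as $T\to\infty$, a full LDP at speed $T$ with rate functional equal to the associated Dirichlet form of $\sqrt{d\mu/d\pi_\beta}$, which after the change of reference measure is identified with $\beta I_\beta(\mu)$ via \eqref{e:ibeta}. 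In particular, for every Borel $\mc A\subset\mc P(M)$,
\begin{equation*}
-\inf_{\mu\in\mathring{\mc A}}\beta I_\beta(\mu)\;\le\;\varliminf_{T\to\infty}\tfrac{1}{T}\log\bb P(\theta_{\beta,T}\in\mc A)\;\le\;\varlimsup_{T\to\infty}\tfrac{1}{T}\log\bb P(\theta_{\beta,T}\in\mc A)\;\le\;-\inf_{\mu\in\overline{\mc A}}\beta I_\beta(\mu).
\end{equation*}

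\textbf{Step 2} (passage $\beta\to\infty$). This is a direct application of the standard consequence of $\Gamma$-convergence and equicoercivity: if $(F_\beta)$ is equicoercive and $\Gamma$-converges to $F$, then $\inf_{\mc A} F_\beta\to\inf_{\mc A}F$ on every $F$-regular set $\mc A$ (one on which $\inf_{\mathring{\mc A}}F=\inf_{\overline{\mc A}}F$), see \cite[Prop.~1.18]{Br}. Dividing the Step~1 bound by $\beta$ and invoking \eqref{e:gc1} yields \eqref{e:ld2}; using the bound as it stands together with \eqref{e:gc2} yields \eqref{e:ld3}; multiplying by $e^{\beta W_k}$ and invoking \eqref{e:gc3} yields \eqref{e:ld4}. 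Equicoercivity of $(\beta I_\beta)$ and of $(\beta e^{\beta W_k}I_\beta)$ follows from the $\Gamma$-liminf inequality in Theorem~\ref{t:main} together with the coercivity of the respective $\Gamma$-limits $J$ and $J_k$.

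I do not foresee a serious obstacle, as the two limits are taken in succession and no uniform-in-$\beta$ ergodicity control is required. The one mildly technical point is verifying the hypotheses of the Donsker--Varadhan theorem in the noncompact case, which is standard under \ref{a:2} (the coercivity of the Witten potential implies, at each fixed $\beta$, both a spectral gap for $L_\beta$ and tightness of $\pi_\beta$). A more ambitious statement would be a single joint LDP for $\theta_{\beta,T}$ at speed $\beta T$ with rate given by the full $\Gamma$-development, but this would require uniform-in-$\beta$ controls not claimed here and is unnecessary for the iterated-limit formulation of the corollary.
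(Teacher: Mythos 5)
Your proposal is correct and follows essentially the same two-step route as the paper: first the Donsker--Varadhan LDP at fixed $\beta$ identifies the rate at speed $T$ with $\beta I_\beta$, and then the $\beta\to\infty$ limit of infima over regular sets is obtained from $\Gamma$-convergence and equicoercivity (the paper packages this second step by citing~\cite{Ma}, which is exactly the general principle you recover from~\cite[Prop.~1.18]{Br}). The only minor stylistic difference is that you make the equicoercivity-plus-$\Gamma$-liminf mechanism explicit rather than invoking the black-box statement from~\cite{Ma}.
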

\begin{proof}
It is not hard to check that, under \ref{a:1}-\ref{a:3}, the hypotheses in \cite{DV} are satisfied, and thus for each fixed $\beta>0$ the empirical measure $\theta_{\beta,\,T}$ satisfies a large deviations principle with rate $\beta\,I_\beta$ as $T\to +\infty$. As well known and easy to prove, the rate function of the large deviations with speed $a_\beta \,T$, for the directed familiy $\theta_{\beta,\,T}$ as $T\to \infty$ and $\beta \to \infty$ is then given by the $\Gamma$-limit of $\frac{1}{a_\beta}\beta I_\beta$, see \cite[Corollary~4.3]{Ma}. Thus the statement follows from Theorem~\ref{t:main} when using respectively $a_\beta= \beta$, $a_\beta=1$ and $a_\beta=e^{-\beta W_k}$.
\end{proof}

\section{The Fisher information}
\label{s:fisher}
In this section we shortly collect some basic facts concerning the Fisher information, we also refer to \cite[Definition~20.6]{Vi} for further details. We only sketch the proofs here since we just restate some facts in what is a well understood framework. In this section we omit the dependence on $\beta$ for the sake of readability.
\begin{definition}
\label{d:dvmore}
Let $V\in C^0(M)$. The Fisher information $I \colon \mc P(M)\to [0,+\infty]$ 
induced by $V$ is defined as
\bel{e:ex1}{
I(\mu) &
    = \sup_{\psi\in \Omega}
    \int d\mu\, \left( e^V \nabla \cdot \psi -\tfrac 12 \,e^{2V} \psi^2\right).
    }
\end{definition}
\noindent
Define the measure $m\in \mc P(M)$ as $dm(x)=e^{-V(x)}dx$.
\begin{remark}
\label{r:lsc}
As a supremum of linear and continuous functionals, $I$ is convex and lower semicontinuous. Moreover if $V\in C^2(M)$, the change of variable $\psi \to -e^{-V}\,\psi$ shows
\bel{e:ex2}{
I(\mu) &  =  \sup_{\psi\in \Omega}
    \int d\mu \left(\nabla V\cdot \psi -\nabla \cdot \psi - \tfrac 12   \,|\psi|^2\right)
    \\ & =
      \begin{cases}
    \tfrac 12 \int dm \tfrac{|\nabla \varrho|^2}{\varrho} & \text{if $d\mu = \varrho dm$ and $\nabla \log \varrho \in L^2(\mu)$}
    \\
  +\infty & \text{otherwise}
   \end{cases}
   \\ &
   =
      \begin{cases}
      \int dx\, \, 2|\nabla h|^2 + h^2 \left(\frac{|\nabla V|^2}2 -\Delta V\right)& \text{if $d\mu = h^2\,dx$ and $h\in H^1(M)$}
    \\
  +\infty & \text{otherwise.}
   \end{cases}   
}
\end{remark}
Let now
 \bel{e:pitilde}{
 \tilde{\mc P}(M):=\left\{\mu\in\mc P(M)\,:\: \frac{d\mu}{dm}\in C^2(M),\, \exists \eps>0\,:\:\frac{d\mu}{dm}\ge \eps  \right\}.
 }
\begin{proposition}
\label{p:dvmore}
Assume that $\int dx \,e^{-V(x)}<+\infty$ and $V\in C^1(M)$. Then $I$ is the lower semicontinuous envelope of $\tilde{I}\colon \mc P(M)\to [0,+\infty]$ defined as
\bel{e:ex3}{
\tilde{I}(\mu) = 
  \begin{cases}
    \tfrac 12 \int dm \tfrac{|\nabla \varrho|^2}{\varrho} & \text{if $\mu\in \tilde{\mc P}(M)$}
    \\
  +\infty & \text{otherwise.}
   \end{cases}
}
\end{proposition}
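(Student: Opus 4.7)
The plan is to prove the equivalent identity $I=\bar{\tilde I}$, where $\bar{\tilde I}$ denotes the lower semicontinuous envelope of $\tilde I$, and I would split this into two inequalities. First, I would show that $I$ and $\tilde I$ agree on $\tilde{\mc P}(M)$. For $\mu=\varrho\,m\in\tilde{\mc P}(M)$, substituting $d\mu=\varrho e^{-V}dx$ into~\eqref{e:ex1} and integrating $\int\varrho\,\nabla\cdot\psi\,dx$ by parts (valid since $\psi$ is smooth and compactly supported) reduces the integrand to $-\nabla\varrho\cdot\psi-\tfrac{\varrho e^V}{2}|\psi|^2$. A pointwise completion of the square in $\psi$ gives the upper bound $I(\mu)\le\tilde I(\mu)$, with equality attained at the smooth form $\psi^\star:=-e^{-V}\nabla\log\varrho$. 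For the matching lower bound I would plug $\psi_n:=\chi_n\,\psi^\star$ into~\eqref{e:ex1}, with $\chi_n$ a smooth compactly supported cutoff converging monotonically to $1$; monotone convergence (using $\varrho\ge\varepsilon$ to keep the integrand well-defined) yields $I(\mu)\ge\tilde I(\mu)$. Since $I$ is the supremum of continuous affine functionals on $\mc P(M)$, it is automatically lower semicontinuous, and consequently $I\le\bar{\tilde I}$.

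For the reverse inequality I would construct a recovery sequence: for each $\mu\in\mc P(M)$ with $I(\mu)<\infty$, produce $\mu_n\in\tilde{\mc P}(M)$ with $\mu_n\to\mu$ weakly and $\limsup_n\tilde I(\mu_n)\le I(\mu)$. By the third form of~\eqref{e:ex2} in Remark~\ref{r:lsc}, $d\mu=h^2\,dx$ for some $h\in H^1(M)$ and $I(\mu)=\int(2|\nabla h|^2+h^2(|\nabla V|^2/2-\Delta V))\,dx$. I would proceed by two regularization steps. First, smooth $h$ via $h_t:=P_t h$, where $P_t$ is convolution with a standard Gaussian kernel on $\bb R^d$ or the intrinsic heat semigroup on a compact $M$, obtaining a smooth nonnegative profile. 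Second, to enforce the strict lower bound on the density required in $\tilde{\mc P}(M)$, I would form the convex combination $\mu_{t,\eta}:=(1-\eta)\,h_t^2\,dx/\|h_t\|_{L^2}^2+\eta\,m/Z$. For $\eta>0$, the density of $\mu_{t,\eta}$ with respect to $m$ is bounded below by $\eta/Z$ and is $C^2$, so $\mu_{t,\eta}\in\tilde{\mc P}(M)$; convexity of $\tilde I$ together with $\tilde I(m/Z)=0$ gives $\tilde I(\mu_{t,\eta})\le(1-\eta)\,\tilde I(h_t^2\,dx/\|h_t\|_{L^2}^2)$. A diagonal extraction $\eta=\eta(t)\to 0$ as $t\to 0$ then delivers the recovery sequence.

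The main technical obstacle is controlling the Fisher integral under the smoothing $h\mapsto h_t$ in the non-compact case $M=\bb R^d$ with $V$ unbounded. The gradient term $\int 2|\nabla h_t|^2\,dx$ is non-increasing under convolution with a symmetric probability kernel, either by Young's inequality in $L^2$ or by a Blachman--Stam-type argument, so it cannot exceed $\int 2|\nabla h|^2\,dx$ in the limit. The potential term $\int h_t^2(|\nabla V|^2/2-\Delta V)\,dx$ passes to the limit by dominated convergence, provided $h$ has sufficient decay against the weight $|\nabla V|^2/2-\Delta V$; this is guaranteed by $I(\mu)<\infty$ combined with Assumption~\ref{a:2}. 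Finally, $\int h_t^2\,dx\to\int h^2\,dx=1$, so the normalization step is harmless. On a compact $M$ both issues disappear and the regularization is routine.
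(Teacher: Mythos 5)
The paper's own proof of this proposition is very brief: it reduces to showing that $\tilde{\mc P}(M)$ is $I$-dense, does the convex combination $\mu^n=(1-\tfrac1n)\mu+\tfrac1n m$ to pass to densities bounded below (using convexity of $I$ and $I(m)=0$), and then cites density of smooth functions in weighted Sobolev spaces. Your first half — the completion of the square and the observation that $I$ is lower semicontinuous, hence $I\le\bar{\tilde I}$ — is a sound and more explicit version of what the paper leaves implicit, and your second half follows the same broad outline (smooth approximation plus convex interpolation with $m$, in the reverse order). So the route is essentially the same as the paper's. However, your attempt to spell out the approximation step has a genuine gap.

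The gap is in the claim that the potential term $\int h_t^2\,\bigl(\tfrac12|\nabla V|^2-\Delta V\bigr)\,dx$ ``passes to the limit by dominated convergence, provided $h$ has sufficient decay against the weight; this is guaranteed by $I(\mu)<\infty$ combined with Assumption~\ref{a:2}.'' First, the proposition does not assume \ref{a:2} (or even $V\in C^2$), so you cannot invoke it. Second, even granting \ref{a:2}, Gaussian mollification by itself does \emph{not} control the weighted $L^2$ norm on $\bb R^d$: $P_t h$ has Gaussian tails for all $t>0$, and if the weight $W:=\tfrac12|\nabla V|^2-\Delta V$ grows faster than $e^{c|x|^2}$ (which \ref{a:2} does not rule out), then $\int (P_t h)^2 W\,dx=+\infty$ for every $t>0$ even when $\int h^2 W\,dx<\infty$ (e.g.\ for $h$ compactly supported). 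Dominated convergence needs a $t$-uniform dominating function, and you have not produced one. The standard repair — and presumably what is hidden behind the paper's ``density of smooth functions in weighted Sobolev spaces'' — is to localize first: multiply $h$ by a smooth compactly supported cutoff $\chi_n$, pass to the limit $n\to\infty$ of $\int(\chi_n h)^2 W\,dx$ by dominated convergence (now legitimate since $|\chi_n h|\le|h|$), and only then convolve the compactly supported function $\chi_n h$ with a \emph{compactly supported} mollifier, so that all supports stay bounded and $W$ is bounded on them. Your heat-semigroup smoothing skips the cutoff, which is exactly where the non-compact case breaks.

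Two further points worth noting. The convexity inequality $\tilde I(\mu_{t,\eta})\le(1-\eta)\tilde I(h_t^2 dx/\|h_t\|^2_{L^2})$ is vacuous unless $h_t^2 dx/\|h_t\|^2_{L^2}\in\tilde{\mc P}(M)$, which is not clear on $\bb R^d$ since $h_t^2 e^V$ need not be bounded below; you should instead apply convexity of $I$ and then use $I=\tilde I$ on $\tilde{\mc P}(M)$ at the very end, once the density is bounded below. Finally, the formula $I(\mu)=\int\bigl(2|\nabla h|^2+h^2(\tfrac12|\nabla V|^2-\Delta V)\bigr)dx$ from Remark~\ref{r:lsc} already presupposes $V\in C^2$, while the proposition only assumes $V\in C^1$; the intrinsic form $\tfrac12\int\frac{|\nabla\varrho|^2}{\varrho}\,dm$ avoids this and keeps the hypotheses honest.
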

\begin{proof}
Since changing $V$ to $V+\log \int dx \,e^{-V(x)}$ does not change $I$, here we assume $\int dx \,e^{-V(x)}=1$. We need to prove that for each $\mu$ there exists $\mu^n\in \tilde {\mc P}(M)$ such that $\mu^n\to \mu$ and $\varlimsup_n \tilde{I}(\mu^n)\le I(\mu)$. Said differently, that $\tilde{\mc P}(M)$ is $I$-dense in $\mc P(M)$. Setting
\bel{e:mundv}{
\mu^n=(1-\tfrac{1}{n}) \mu+\tfrac{1}{n} m
}
one easily gathers $\mu^n\to \mu$ and $I(\mu^n)\le I(\mu)$. Therefore it is enough to prove that $\tilde{\mc P}(M)$ is $I$-dense in the set of $\mu\in \mc P(M)$ with density bounded away from $0$. This is immediate from the density of smooth functions in weighted Sobolev spaces.

\end{proof}

In view of Remark~\ref{r:lsc}, the functional $I_\beta$, already introduced in~\eqref{e:ibeta} and appearing in our main theorem, can be equivalently defined as
\bel{e:ibeta2}{
I_\beta(\mu):=\beta^{-2}  I^{\beta V} , 
}
where $I^{\beta V}$ is the Fisher information induced by $\beta V$ as defined in Definition~\ref{d:dvmore}.

\section{Spectral analysis of the generator}
\label{s:generator}

In this section we denote by $\dangle{\cdot}{\cdot}$ the inner product in $L^2(m_\beta)$. 
Consider the operator
\bel{e:diffgen}{L_\beta f:=\Delta f-\beta \nabla V \cdot \nabla f
}
defined for $f\in C^\infty_{c}(M)$. By \ref{a:2} and standard results, it uniquely extends to a self-adjoint operator in $L^2(m_\beta)$ with compact resolvent. We still denote by $L_\beta$ such an extension.

\subsection{Quasimodes of the bounded spectrum}
\label{ss:simon}
For $z\in \wp$ and $\undn=(n_1,\ldots,n_d)\in \bb N^d$ let
\bel{e:deflamb}{
\lambda_{z,\undn}:=   \frac {\zeta(z)}{2}+   \sum_{k=1}^d n_k |\xi_k(z)|,
}
where  $\zeta$ and the $\xi_k$'s are defined in Subsection~\ref{ss:theorem}. 
The set $(\lambda_{z,\undn})_{\undn\in \bb N^d}$ is the spectrum of a suitably rescaled quadratic approximation (i.e.\ of a harmonic oscillator) around the point $z\in \wp$ of 
the   Schr\"{o}dinger operator  \eqref{e:Witten}, see also the proof of Lemma~\ref{l:pavese} for this interpretation. Define also
\bel{e:lambdacard}{
S_\lambda:=\left\{(z,\undn)\in \wp \times \bb N^d\,:\:\lambda_{z,\undn}
	=\lambda\right\}.     
}

The following theorem conveniently resumes in our setting some of the main results obtained in the paper \cite{Si}          
 in a more general framework of semiclassical Schr\"{o}dinger operators. Throughout this Subsection~\ref{ss:simon} it is sufficient to assume just \ref{a:1} and \ref{a:2}.
 \begin{theorem}[Bounded eigenvalues]
 \label{t:simon}
 Fix $\Lambda>0$. There  exists $\eps_\Lambda>0$ such that for each $\eps \in (0,\eps_\Lambda)$
 the following holds:
  there exists $\beta_{\eps}>0$ so that for $\beta\ge \beta_{\eps}$ 
\bel{e:lambdaa}{
\mathrm{Spec}(-\tfrac{1}{\beta} L_\beta)\cap [0,\Lambda]    \ \subset     \  \bigcup_{\lambda\,:\:S_\lambda\neq \emptyset} (\lambda-\eps,\lambda+\eps)\cap [0,\Lambda]. 
}
Moreover, for each $\lambda\in [0,\Lambda]$ such that $S_\lambda\neq \emptyset$, the cardinality of $\mathrm{Spec}(-\tfrac{1}{\beta} L_\beta)\cap (\lambda-\eps,\lambda+\eps)$ equals, when each eigenvalue is counted with its multiplicity, the cardinality of $S_\lambda$.
\end{theorem}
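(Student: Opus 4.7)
The plan is to reduce the statement to the standard semiclassical analysis of~\cite{Si} by conjugating $-L_\beta$ into a Schr\"odinger operator. Concretely, using the ground state transformation~\eqref{e:Witten} and setting $\hbar:=1/\beta$, the operator $-\tfrac{1}{\beta}L_\beta$ on $L^2(m_\beta)$ is unitarily equivalent to $\beta\,H_\hbar$, where
\[
H_\hbar \;:=\; -\hbar^2\Delta \;+\; \tfrac14|\nabla V|^2 \;-\;\tfrac{\hbar}{2}\Delta V
\]
acts on $L^2(M,dx)$. Consequently $\mu\in\mathrm{Spec}(-\tfrac1\beta L_\beta)$ iff $\hbar\mu\in\mathrm{Spec}(H_\hbar)$, and it suffices to locate $\mathrm{Spec}(H_\hbar)\cap[0,\hbar\Lambda]$.

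Next I would identify the sets $S_\lambda$ with harmonic-oscillator spectra at the wells of the effective potential $U:=\tfrac14|\nabla V|^2$. Note $U$ vanishes exactly on $\wp$, which by~\ref{a:1}--\ref{a:2} is finite and consists of non-degenerate wells, and~\ref{a:2} ensures $U-\tfrac{\hbar}{2}\Delta V$ is uniformly confining as $|x|\to\infty$ when $M=\bb R^d$. Taylor-expanding at $z\in\wp$ yields
\[
U(x)=\tfrac14(x-z)^\top(\hess V(z))^2(x-z)+O(|x-z|^3),
\]
so the quadratic model of $H_\hbar$ at $z$ is the harmonic oscillator $-\hbar^2\Delta+\tfrac14(x-z)^\top(\hess V(z))^2(x-z)-\tfrac{\hbar}{2}\mathrm{tr}(\hess V(z))$. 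Diagonalizing $\hess V(z)$ with eigenvalues $(\xi_k(z))_k$ and applying the product formula for the Hermite spectrum, its eigenvalues are exactly $\{\hbar\lambda_{z,\undn}\}_{\undn\in\bb N^d}$ with $\lambda_{z,\undn}$ given by~\eqref{e:deflamb}; in particular $S_\lambda$ records with multiplicity the harmonic-model eigenvalues equal to~$\lambda$.

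Given this identification, both~\eqref{e:lambdaa} and the multiplicity count would follow from the two standard ingredients in~\cite{Si}. For the upper bound on the number of low-lying eigenvalues I would build, for each $(z,\undn)\in S_\lambda$, a quasimode by multiplying the corresponding Hermite eigenfunction of the harmonic model by a smooth cutoff supported in a shrinking neighborhood of $z$; a direct computation then gives a Rayleigh quotient within $o(\hbar)$ of $\hbar\lambda_{z,\undn}$, and the min-max principle produces at least $\#S_\lambda$ eigenvalues of $H_\hbar$ in $(\hbar\lambda-\hbar\eps,\hbar\lambda+\hbar\eps)$. For the matching lower bound I would use an IMS partition of unity $\{\chi_z\}_{z\in\wp}\cup\{\chi_\infty\}$ to decompose
\[
H_\hbar \;=\; \sum_{z\in\wp}\chi_z H_\hbar\chi_z \;+\; \chi_\infty H_\hbar\chi_\infty \;-\; \hbar^2\sum_i|\nabla\chi_i|^2,
\]
with each local block a cubic perturbation of the harmonic model and $\chi_\infty H_\hbar\chi_\infty$ bounded below by a positive constant uniformly in $\hbar$. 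The main technical point, and the reason~\ref{a:2} is imposed, lies in this last uniform-in-$\hbar$ confinement when $M=\bb R^d$; in the compact case it reduces to a straightforward compactness argument, after which choosing $\eps$ smaller than any gap in $\{\lambda_{z,\undn}\}\cap[0,\Lambda]$ yields both~\eqref{e:lambdaa} and the claimed equality of multiplicities.
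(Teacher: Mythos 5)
The paper does not prove Theorem~\ref{t:simon}: it is explicitly presented as a restatement, in the present framework, of results from \cite{Si} after the ground state transformation \eqref{e:Witten}, and the authors simply invoke that reference. Your sketch is therefore being compared not with an in-paper argument but with the cited one, and it reconstructs Simon's strategy correctly: conjugate to the semiclassical Schr\"odinger operator $H_\hbar=-\hbar^2\Delta+\tfrac14|\nabla V|^2-\tfrac{\hbar}{2}\Delta V$ with $\hbar=1/\beta$; Taylor-expand at each $z\in\wp$ to obtain the harmonic model whose eigenvalues are $\hbar\lambda_{z,\undn}$ (your computation reducing to $\tfrac{\zeta(z)}{2}+\sum_k n_k|\xi_k(z)|$ is correct); use cut-off Hermite functions and min--max for the upper bound on the counting function; use an IMS partition of unity for the matching lower bound, with \ref{a:2} providing exactly the uniform-in-$\hbar$ positive lower bound on the outer block $\chi_\infty H_\hbar\chi_\infty$ when $M=\bb R^d$. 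One remark: the cut-off Hermite quasimodes you propose for the upper bound are precisely the $\tilde\Theta_{\beta,z,\undn}=\chi_z\Theta_{\beta,z,\undn}/\sqrt{Z_{\beta,z,\undn}}$ constructed later in Lemma~\ref{l:pavese} to prove Proposition~\ref{p:vittorini}; you could have cross-referenced that construction rather than re-deriving the Rayleigh-quotient estimate, since that lemma is where the paper actually carries out the error bound \eqref{e:abetastima}--\eqref{e:normaH2} and also treats the compact-manifold case via \eqref{e:abetastimamanifold}, which your sketch dismisses a bit too quickly.
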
  
This theorem states that in the limit $\beta\to \infty$, counting multiplicity, the spectrum of $-\tfrac{1}{\beta} L_\beta$ is well-aproximated 
by the spectrum of the direct union of suitable harmonic oscillators. It is necessary to fix a 
threshold $\Lambda$, since this harmonic approximation is not uniform
for $\Lambda\to \infty$. 

%

For $\lambda\geq 0$ and $\eps>0$ denote by 
\bel{e:proj2}{
P_{\beta,\eps,\lambda}:= \un { (\lambda-\eps, \lambda+\eps)}(-\tfrac{1}{\beta}L_\beta)
}
 the spectral projection of $-\tfrac{1}{\beta}L_\beta$ associated to the interval $ (\lambda-\eps, \lambda+\eps)$.

\begin{proposition}
\label{p:vittorini}
There exists an orthonormal base $(\Psi_{\beta,z,\undn})_{z\in \wp,\undn \in \bb N^d}$ of $L^2(m_\beta)$ such that for every $\eps>0, z\in \wp
$ and $ \undn \in \bb N^d$ the following holds: 
there exists a $\beta_{\eps, z, \undn }>0$ such that for $\beta> \beta_{\eps, z, \undn}$
and for every $\varphi\in C_b(M)$,
\bel{e:eigen}{
\Psi_{\beta,z,\undn} \in \mathrm{Range}\left(P_{\beta,\eps,\lambda_{z,\undn}}\right)    ,
}
\bel{e:measconv2}{
\lim_\beta      \int \Psi_{\beta, z, \undn} \,\Psi_{\beta, z', \undn'}\, \varphi\, dm_\beta    =  
\begin{cases}\varphi(z)    &     \text{ if   $z=z'$ and   $\undn=\undn' $}   \\
0   &    \text{ otherwise}
\end{cases}.
}
\end{proposition}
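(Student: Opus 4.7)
The plan is to apply the ground-state conjugation \eqref{e:Witten} so as to work with the rescaled Schr\"odinger operator $H_\beta := -\tfrac{1}{\beta}\Delta + \tfrac{\beta}{4}|\nabla V|^2 - \tfrac{1}{2}\Delta V$ on $L^2(dx)$, to construct explicit Hermite-type quasimodes localized at each critical point, and then to use Theorem~\ref{t:simon} together with a spectral projection and a Gram--Schmidt step to promote them to a genuine orthonormal basis of $L^2(m_\beta)$.

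For each $z\in \wp$ pick local coordinates $x_1,\dots,x_d$ centered at $z$ that diagonalize $\hess V(z)$ with eigenvalues $\xi_k:=\xi_k(z)$, together with a smooth cutoff $\chi_z$ supported in a ball $B_\delta(z)$ with $\chi_z\equiv 1$ on $B_{\delta/2}(z)$, $\delta$ so small that the $B_\delta(z)$'s are pairwise disjoint. Define
\begin{equation*}
\tilde\Psi_{\beta,z,\undn}(x) := \chi_z(x)\,\prod_{k=1}^d \Bigl(\tfrac{\beta|\xi_k|}{2\pi}\Bigr)^{1/4}\tfrac{1}{\sqrt{2^{n_k}n_k!}}\,H_{n_k}\!\bigl(\sqrt{\beta|\xi_k|/2}\,x_k\bigr)\,e^{-\beta|\xi_k|x_k^2/4},
\end{equation*}
which after rescaling $y_k = \sqrt{\beta|\xi_k|/2}\,x_k$ become products of standard normalized Hermite functions $h_{n_k}(y_k)$. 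Taylor-expanding $|\nabla V|^2$ and $\Delta V$ around $z$ on the Gaussian window $|x|=O(\beta^{-1/2})$, and using both $(-\partial_y^2+y^2)h_n=(2n+1)h_n$ and the identity $\sum_k|\xi_k|(n_k+\tfrac12)-\tfrac12\sum_k\xi_k=\lambda_{z,\undn}$, a direct computation gives
\begin{equation*}
\bigl\|(H_\beta - \lambda_{z,\undn})\tilde\Psi_{\beta,z,\undn}\bigr\|_{L^2(dx)} = O(\beta^{-1/2}),
\end{equation*}
the commutators with $\chi_z$ contributing only errors that are exponentially small in $\beta$ by Gaussian tails. The family $\{\tilde\Psi_{\beta,z,\undn}\}$ is of unit $L^2(dx)$-norm, pairwise orthogonal for different $z$ (disjoint supports) and for different $\undn$ at fixed $z$ (Hermite orthogonality), and concentrates at $z$: for every $\varphi\in C_b(M)$,
\begin{equation*}
\lim_\beta \int \tilde\Psi_{\beta,z,\undn}\,\tilde\Psi_{\beta,z',\undn'}\,\varphi\,dx = \begin{cases}\varphi(z) & \text{if }(z,\undn)=(z',\undn'),\\ 0 & \text{otherwise.}\end{cases}
\end{equation*}

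By the spectral theorem and the quasimode estimate, $\|P_{\beta,\eps,\lambda_{z,\undn}}\tilde\Psi_{\beta,z,\undn}-\tilde\Psi_{\beta,z,\undn}\|_{L^2(dx)}=o(1)$ for any fixed $\eps>0$. Fix $\lambda$ in the range of $(z,\undn)\mapsto\lambda_{z,\undn}$; by Theorem~\ref{t:simon}, $\dim\mathrm{Range}(P_{\beta,\eps,\lambda})=|S_\lambda|$ for $\beta$ large, so the projected family $\{P_{\beta,\eps,\lambda}\tilde\Psi_{\beta,z,\undn}\}_{(z,\undn)\in S_\lambda}$ spans that range and its Gram matrix converges to the identity. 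Gram--Schmidt within each $S_\lambda$ then produces $\tilde\Psi^\flat_{\beta,z,\undn}\in\mathrm{Range}(P_{\beta,\eps,\lambda_{z,\undn}})$ with $\|\tilde\Psi^\flat_{\beta,z,\undn}-\tilde\Psi_{\beta,z,\undn}\|_{L^2(dx)}\to 0$. Setting $\Psi_{\beta,z,\undn}:= e^{\beta V/2}\tilde\Psi^\flat_{\beta,z,\undn}\in L^2(m_\beta)$ yields \eqref{e:eigen}, and \eqref{e:measconv2} follows from the corresponding property of the $\tilde\Psi$'s via the unitary identity $\int \Psi\Psi'\varphi\,dm_\beta=\int \tilde\Psi^\flat\tilde\Psi'{}^\flat\varphi\,dx$ and the $L^2$-smallness of $\tilde\Psi^\flat-\tilde\Psi$. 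Taking $\Lambda\to\infty$ and using that $L_\beta$ has purely discrete spectrum (compact resolvent by \ref{a:2}) exhausts the whole orthonormal basis.

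The main obstacle is to ensure that the Gram--Schmidt step does not spoil \eqref{e:measconv2} when $|S_\lambda|\ge 2$: a priori, quasimodes associated to \emph{distinct} critical points could be mixed, creating linear combinations concentrated on several points of $\wp$. This is ruled out by the fact that whenever $(z,\undn)\neq(z',\undn')$ in $S_\lambda$ with $z\neq z'$, the supports of $\tilde\Psi_{\beta,z,\undn}$ and $\tilde\Psi_{\beta,z',\undn'}$ are disjoint and their inner product vanishes exactly; thus the Gram matrix of the projected family differs from the identity only by the $o(1)$ error introduced by $P_{\beta,\eps,\lambda}$, and the orthonormalizing linear combinations remain asymptotically concentrated at the original $z$. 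Mixing different $\undn$ at the same $z$ is harmless, since the resulting linear combinations still concentrate at that single $z$.
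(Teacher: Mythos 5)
Your proof follows the same strategy as the paper's (Lemma~\ref{l:pavese} and the ensuing argument): construct cutoff Hermite quasimodes for the conjugated operator $\tfrac1\beta A_\beta$ localized at each $z\in\wp$, derive the $O(\beta^{-1/2})$ quasimode estimate, project onto the spectral clusters from Theorem~\ref{t:simon}, and orthonormalize by Gram--Schmidt, using disjointness of the cutoff supports to control mixing between distinct critical points. The only (harmless) imprecision is the claim that the cutoff quasimodes have exactly unit $L^2(dx)$-norm and are exactly pairwise orthogonal for distinct $\undn$ at the same $z$: multiplying by $\chi_z$ destroys exact Hermite orthogonality, but only by exponentially small errors, which is all the Gram-matrix argument actually requires (the paper normalizes by $\sqrt{Z_{\beta,z,\undn}}$ and passes to the limit in \eqref{e:thetadelta} for this reason).
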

We stress that
eigenbases of $L_\beta$ in general fail to satisfy \eqref{e:eigen}-\eqref{e:measconv2}, due to possible resonances (that is $\lambda_{z,\undn}=\lambda_{z',\undn'}$ for some $z\neq z'$ and $\undn$, $\undn'$). We prove the proposition after the preliminary Lemma~\ref{l:pavese}. 

\begin{lemma}
\label{l:pavese}
 For each $z\in \wp$, $\undn\in \bb N^d$ there exists $\tilde \Psi_{\beta, z, \undn}\in C^2_c(M)$
such that for every $\varphi\in C_b(M)$
\bel{e:lemmai}{
  \lim_{\beta} \int  |(-   \tfrac 1\beta  L_\beta- \lambda_{z,\undn}) \tilde \Psi_{\beta, z, \undn} (x)    |^2   dm_\beta
=   0 ,
}
\bel{e:lemmaii}{
\lim_\beta      \int \tilde \Psi_{\beta, z, \undn}\tilde \Psi_{\beta, z', \undn'} \varphi\, dm_\beta    =  
\begin{cases}\varphi(z)    &     \text{ if   $z=z'$ and   $\undn=\undn' $}   \\
0   &    \text{ otherwise}
\end{cases}.
}
\end{lemma}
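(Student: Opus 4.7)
The plan is to construct the quasimodes by localizing, in a neighborhood of each critical point $z\in \wp$, rescaled Hermite eigenfunctions of the harmonic oscillator that arises when one Taylor-expands the Witten Laplacian at $z$. Fix $z\in \wp$ and pick local coordinates $(x_1,\dots,x_d)$ around $z$ (exponential at $z$ if $M$ is a manifold, affine otherwise) such that $z$ corresponds to the origin and $\hess V(z)$ is diagonal with entries $\xi_i = \xi_i(z)$. Fix a smooth cutoff $\chi$ supported in a small coordinate ball and equal to $1$ near $0$, and define
\[
\tilde\Psi_{\beta,z,\undn}(x) := c_{\beta,z,\undn}\,\chi(x)\,e^{\beta V(x)/2}\prod_{i=1}^d H_{n_i}\!\bigl(\sqrt{\tfrac{\beta|\xi_i|}{2}}\,x_i\bigr)\exp\!\bigl(-\tfrac{\beta|\xi_i|}{4}x_i^2\bigr),
\]
where $H_n$ is the $n$-th Hermite polynomial and $c_{\beta,z,\undn}\asymp\beta^{d/4}$ is a normalization constant fixed below. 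The key remark is that the Witten conjugate $\varphi_{\beta,z,\undn}:= e^{-\beta V/2}\tilde\Psi_{\beta,z,\undn}$ is, on $\{\chi\equiv 1\}$, an exact eigenfunction of the harmonic-oscillator approximation $H^{\mathrm{harm}}_{z,\beta}:= -\Delta+\tfrac{\beta^2}{4}\sum_i \xi_i^2 x_i^2-\tfrac{\beta}{2}\sum_i\xi_i$ with eigenvalue $\beta\lambda_{z,\undn}$, since each $1$D factor is an eigenfunction of $-\partial_i^2+\tfrac{\beta^2\xi_i^2}{4}x_i^2$ with eigenvalue $\beta|\xi_i|(n_i+\tfrac12)$ and $\sum_i\beta|\xi_i|(n_i+\tfrac12)-\tfrac{\beta}{2}\sum_i\xi_i=\beta(\sum_i n_i|\xi_i|+\zeta(z)/2)=\beta\lambda_{z,\undn}$.

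For \eqref{e:lemmaii}, the $e^{\beta V}$ in $|\tilde\Psi|^2$ cancels against $e^{-\beta V}$ in $dm_\beta$ and the rescaling $y_i:=\sqrt{\beta|\xi_i|/2}\,x_i$ yields, for $z=z'$,
\[
\int\tilde\Psi_{\beta,z,\undn}\tilde\Psi_{\beta,z,\undn'}\varphi\,dm_\beta = c_{\beta,z,\undn}c_{\beta,z,\undn'}\prod_i\!\sqrt{\tfrac{2}{\beta|\xi_i|}}\int \chi^2\!\prod_i H_{n_i}H_{n_i'}e^{-|y|^2}\varphi(x(y))\,dy,
\]
while for $z\neq z'$ the integral vanishes identically for $\beta$ large, because the supports of the localizing cutoffs around $z$ and $z'$ become disjoint. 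Choosing $c_{\beta,z,\undn}=\beta^{d/4}\tilde c_{z,\undn}$ with $\tilde c_{z,\undn}^{-2}:= \prod_i \sqrt{2/|\xi_i|}\,\|H_{n_i}\|^2_{L^2(e^{-y^2}dy)}$, the pointwise limits $\chi(x(y))\to 1$ and $\varphi(x(y))\to\varphi(z)$ combined with the Gaussian domination $|H_{n_i}H_{n_i'}|e^{-|y|^2}\in L^1(dy)$ give, by dominated convergence, that the diagonal $\undn=\undn'$ tends to $\varphi(z)$ while off-diagonal terms ($\undn\neq \undn'$) vanish by Hermite orthogonality $\int H_n H_{n'}e^{-y^2}dy=0$.

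For \eqref{e:lemmai}, the Witten conjugation $-L_\beta = e^{\beta V/2}H_\beta e^{-\beta V/2}$ with $H_\beta:= -\Delta+\tfrac{\beta^2}{4}|\nabla V|^2-\tfrac{\beta}{2}\Delta V$ yields
\[
\bigl\|(-\tfrac1\beta L_\beta-\lambda_{z,\undn})\tilde\Psi_{\beta,z,\undn}\bigr\|^2_{L^2(m_\beta)} = \frac{1}{\beta^2}\bigl\|(H_\beta-\beta\lambda_{z,\undn})\varphi_{\beta,z,\undn}\bigr\|^2_{L^2(dx)}.
\]
On $\{\chi\equiv 1\}$ we use the exact harmonic-oscillator eigenfunction property to write $(H_\beta-\beta\lambda_{z,\undn})\varphi_{\beta,z,\undn}=(H_\beta-H^{\mathrm{harm}}_{z,\beta})\varphi_{\beta,z,\undn}=\tfrac{\beta^2}{4}R_1\varphi-\tfrac\beta2 R_2\varphi$ with Taylor remainders $R_1:=|\nabla V|^2-\sum_i\xi_i^2 x_i^2=O(|x|^3)$ and $R_2:=\Delta V-\sum_i\xi_i=O(|x|)$ (on a manifold, $g=I+O(|x|^2)$ and the Christoffel symbols vanish at $z$ in normal coordinates, so these orders are preserved). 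The rescaling $y=\sqrt\beta\,x$ gives $\||x|^k\varphi_{\beta,z,\undn}\|^2_{L^2(dx)}=O(\beta^{-k})$ (since $c_{\beta,z,\undn}^2\sim\beta^{d/2}$ compensates exactly the $\beta^{-k-d/2}$ picked up by the rescaling), so $\|\beta^2|x|^3\varphi\|^2+\|\beta|x|\varphi\|^2=O(\beta)$; dividing by $\beta^2$ yields an interior error of order $\beta^{-1}\to 0$. The commutator term $[\Delta,\chi]\varphi_{\beta,z,\undn}$ is supported on $\{|x|\gtrsim 1\}$ where $\varphi_{\beta,z,\undn}=O(e^{-c\beta})$, so its contribution is exponentially small. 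The main technical obstacle is the uniform bookkeeping of scales: matching the Gaussian domination in \eqref{e:lemmaii} with the Taylor-remainder estimates in \eqref{e:lemmai} and checking that the normal-coordinate corrections on the manifold do not worsen the orders of $R_1,R_2$; once the correct harmonic approximation is identified the rest is a standard semiclassical estimate.
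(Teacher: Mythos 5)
Your proposal is correct and follows essentially the same strategy as the paper's proof: localize (via a compactly supported cutoff) the rescaled Hermite eigenfunctions of the harmonic oscillator that approximates the Witten–Schr\"odinger operator near $z$, verify \eqref{e:lemmaii} by rescaling plus Hermite orthogonality, and prove \eqref{e:lemmai} by bounding the Taylor remainders $O(\beta^2|x-z|^3)+O(\beta|x-z|)$ of the potential together with the exponentially small commutator from the cutoff. The scaling bookkeeping $\||x-z|^k\varphi_{\beta,z,\undn}\|^2_{L^2}=O(\beta^{-k})$ and the resulting $O(\beta^{-1})$ bound on $\frac{1}{\beta^2}\|(A_\beta-\beta\lambda_{z,\undn})\varphi_{\beta,z,\undn}\|^2$ match the paper's estimate \eqref{e:normaH2}, and the treatment of the compact manifold case via normal coordinates is also consistent (the paper only assumes $g(z)=\mathrm{Id}$, giving slightly weaker $O(|y|)$ metric corrections, while your normal coordinates give $O(|y|^2)$ — both suffice).
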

\begin{proof}
First consider the case $M=\bb R^d$.  Define
 for $\beta>0$ the differential operator  (see also~\eqref{e:Witten})
\bel{e:abeta}{
A_\beta  :=     -    \Delta    +   \frac{1}{4}  \beta^2 |\nabla V|^2  -     \frac{1}{2} \beta \Delta V   
}
and let
 for $z\in \wp$ 
 \bel{e:hbetaz}{
  H_{\beta,z} :=  - \Delta    +      \beta^2 Q_z   -   \beta  C_z    ,
  }
where
\bel{e:qzz}{   Q_z(x)      =      \frac 14 \hess^2 V(z) (x-z)\cdot(x-z) ,  \qquad   C_z   =   \frac 12\mathrm{Trace} (\hess V(z)) .
}
The operator $H_{\beta,z}$ extends to a self-adjoint operator in $L^2(dx)$, which is a harmonic oscillator shifted by the constant $\beta C_z$. In particular its spectrum is given by $(\beta \lambda_{z, \undn})_{\undn\in \bb N^d}$. Let now $(\Theta_{z,\undn})_{\undn\in \bb N^d}$ be a corresponding orthonormal basis of eigenfunctions of $H_{z, \beta=1}$; the $\Theta_{z,\undn}$ are Hermite functions centered at $z$. It is easy to check that $(\Theta_{\beta,z,\undn})_{\undn}$ defined by
\bel{thetazeta}{
\Theta_{\beta,z,\undn}(x)=\beta^{1/4} \Theta_{z,\undn}(z+\sqrt{\beta}(x-z))
}
is an orthonormal base of eigenfunctions of $H_{z,\beta}$ and
\bel{e:convtheta}{
\lim_{\beta\to+\infty} \Theta_{\beta,z,\undn}^2 dx=\delta_z(dx) \qquad \text{in $\mc P(M)$.}
}
Then define 
$\tilde \Theta_{\beta, z, \undn } (x):=
\frac{\chi_{z}(x)  \Theta_{\beta,z,\undn} ( x) }{ \sqrt {Z_{\beta,z, \undn}}   }$, where
$\chi_z\in C_c^\infty(M;[0,1])$ is a smooth cut-off function such that $\chi\equiv 1$ on a ball $B_r(z)$ centered at $z$ of radius $r$, and  $\chi\equiv 0$ on $B^c_{2r}(z)$, where $r>0$ is chosen sufficiently small (in particular such that the $\chi_z$'s have pairwise disjoint supports). 

It follows from Taylor expansion of the Schr\"{o}dinger potential $ \frac{1}{4}  \beta^2 |\nabla V|^2   -      \frac{1}{2} \beta \Delta V   $  that, as $x \to z$,
\bel{e:abetastima}{
 A_\beta =   H_{\beta,z}         +    O\left(\beta^2 |x-z|^3\right)   +   O\left(\beta|x-z|\right)  .
 }
Thus one obtains
\bel{e:abetastimaz}{
& \int \left[(A_\beta -  \beta \lambda_{z, \undn})
  \tilde \Theta_{\beta, z, \undn } (x)\right]^2 dx     =  
 \\  
& \qquad   \frac{1}{Z_{\beta,z, \undn}  }   \int     \Big[
    \Theta_{\beta,z,\undn}     \Delta\chi_{z} 
+   2 \nabla \chi_{z}  \cdot \nabla \Theta_{\beta,z,\undn} 
\\ & \phantom{  \qquad \frac{1}{Z_{\beta,z, \undn}  }   \int     \Big[}
  +   \left(O\left(\beta^2 |x-z|^3\right)  
    +   O\left(\beta|x-z |\right)   \right)
  \chi_{z}  \Theta_{\beta,z,\undn}     \Big]^2  dx       .  
}
 Hence, using the explicit form of the $\Theta_{\beta,z,\undn}$'s as Hermite functions centered at $z$, it follows by Laplace asymptotics that
 \bel{e:normaH2}{    \tfrac{1}{\beta^2} \int     \left[(A_\beta -  \beta \lambda_{z, \undn}) \tilde \Theta_{\beta, z, \undn } (x)\right]^2 dx    =    O(\beta^{-1})  . }
Moreover, from \eqref{e:convtheta} and $L^2(dx)$ orthogonality of the Hermite functions $(\Theta_{z,\undn})_{\undn}$ it easily follows
for $\varphi\in C_b(M)$
\bel{e:thetadelta}{ \lim_\beta      \int \tilde \Theta_{\beta, z, \undn}\tilde \Theta_{\beta, z', \undn'} \varphi\, dx   =  
\begin{cases}\varphi(z)    &     \text{ if   $z=z'$ and   $\undn=\undn' $}   \\
0   &    \text{ otherwise}
\end{cases}.
}
Finally define $\tilde\Psi_{\beta, z, \undn } =  \tilde \Theta_{\beta, z, \undn } e^{\frac{\beta V}{2}}$ and 
note that
\[     \int  |(-   \tfrac 1\beta L_\beta- \lambda_{z,\undn}) \tilde \Psi_{\beta, z, \undn} (x)    |^2   dm_\beta
=      \tfrac{1}{\beta^2} \int  |(A_\beta- \beta\lambda_{z,\undn}) \tilde \Theta_{\beta, z, \undn} (x)    |^2   dx          \]
and
\[   \int \tilde \Psi_{\beta, z, \undn}\tilde \Psi_{\beta, z', \undn'} \varphi\, dm_\beta    =  
  \int \tilde \Theta_{\beta, z, \undn}\tilde \Theta_{\beta, z', \undn'} \varphi\, dx 
. 
     \]
Therefore \eqref{e:lemmai} and \eqref{e:lemmaii} follow from \eqref{e:normaH2} and \eqref{e:thetadelta}.
   
The proof in the case  of $M$ a compact manifold follows from a straightforward adaptation of the previous arguments: since the Laplace-Beltrami operator in coordinates
takes the form 
\bel{e:laplaceBeltrami}{    \sum_{i,j} g^{ij} \partial_i\partial_j     +    \left(   \partial_j g^{ij}    + g^{ij} \partial_j \log \sqrt{\det g}      \right)   \partial_j     ,   }
using in particular 
local coordinates $\{y= y^{(z)}\}$ around each $z\in \wp$ such that the metric tensor is the identity in $0$, leads (instead of \eqref{e:abetastima}) 
to the local estimate 
\bel{e:abetastimamanifold}{ A_\beta =   H_{\beta,z}   +      \sum_{i}   \left[   O\left(  |y|   \right)   \partial_i\partial_i   +
   O( 1)   \partial_i    \right]             +    O\left(\beta^2 |y|^3\right)   +   O\left(\beta|y|\right)          .         }
Chosing $r$ sufficiently small such that the support of $\chi_z$ is contained in the corresponding local chart $\{y^{(z)}\}$, one can argue as before through Laplace asymptotics
and show that the additional terms appearing in \eqref{e:abetastimamanifold} give again an $O(\beta)$ contribution. 
\end{proof}

\begin{remark}
\label{r:hilbert}
Let $T$ be a bounded operator on a Hilbert space $H$ and let $u,v\in H$ be normalized. Then
\bel{e:tt}{
\left| \dangle{v}{Tv'}-\dangle{u}{Tu'} \right|\le 2 \,\|T\|     \left(  \sqrt{1-\dangle{u}{v}^2}    +  \sqrt{1-\dangle{u'}{v'}^2}   \right)   .
}
\end{remark}
\begin{proof}
Write $v=\dangle{u}{v}u+(v-\dangle{u}{v}u)$ and note that $\|v-\dangle{u}{v}u\|_{H}=1-\dangle{u}{v}^2$. Using the inequality
$\sqrt{1-a^2} \sqrt{1-a'^2} + \sqrt{1-aa'}\le \sqrt{1-a^2} + \sqrt{1-a'^2}$ for $a=\dangle{u}{v}$ and
$a'=\dangle{u'}{v'}$  one gets the result by straigthforward linear algebra. 
\end{proof}

\begin{proof}[Proof of Proposition~\ref{p:vittorini}]
We use the same notation as in Theorem~\ref{t:simon}. Fix $\Lambda>0$ large enough. For $\lambda< \Lambda$ such that $S_\lambda\neq \emptyset$,
for $(z, \undn)\in S_\lambda$ take  $\tilde\Psi_{\beta,z,\undn}$ as in Lemma~\ref{l:pavese}. Define $\bar \Psi_{\beta,z,\undn}:=P_{\beta,\eps_{\Lambda},\lambda}\tilde\Psi_{\beta,z,\undn}$ as the spectral projection of $\tilde\Psi_{\beta,z,\undn}$  corresponding to the  interval $(\lambda-\eps_{\Lambda}, 
\lambda+\eps_{\Lambda})$, see \eqref{e:proj2}. 

Now for $(z,\undn),(z',\undn)\in S_\lambda$, since $P_{\beta,\eps_{\Lambda},\lambda}$ is a projector,
\begin{gather}     \dangle{\bar \Psi_{\beta,z,\undn}}{\bar  \Psi_{\beta,z',\undn'}}_{L^2(m_\beta)}  =    \nonumber \\     \dangle{\tilde \Psi_{\beta,z,\undn}}{\tilde  \Psi_{\beta,z',\undn'}}_{L^2(m_\beta)}
-  \dangle{\tilde \Psi_{\beta,z,\undn}   - \bar  \Psi_{\beta,z,\undn}      }{\tilde  \Psi_{\beta,z',\undn'}   -    \bar \Psi_{\beta,z',\undn'} }_{L^2(m_\beta) }   .  \label{e:scal} 
   \end{gather}
The definition of $\bar \Psi_{\beta,z,\undn}$ readily implies
\bel{e:tildebar}{ 
  \|   \tilde \Psi_{\beta,z,\undn}   - \bar  \Psi_{\beta,z,\undn }   \|_{L^2(m_\beta)}  
&
 = \| ( I-P_{\beta,\eps_{\Lambda},\lambda})    \tilde \Psi_{\beta,z,\undn}   \|_{L^2(m_\beta)}  
    \\ &
     \leq    \frac{1}{\eps_{\Lambda}}  
\|(-\tfrac 1 \beta L_\beta- \lambda_{z,\undn}) \tilde \Psi_{\beta, z, \undn}\|_{L^2(m_\beta)}    .
}
In view of \eqref{e:lemmai} this vanishes as $\beta\to\infty$. Therefore the last term in \eqref{e:scal} vanishes as $\beta\to\infty$ and \eqref{e:lemmaii} now gives
\bel{e:psipsi}{   \lim_\beta \dangle{\bar \Psi_{\beta,z,\undn}}{\bar  \Psi_{\beta,z',\undn'}}_{L^2(m_\beta)}     =
\begin{cases}1    &     \text{ if   $z=z'$ and   $\undn=\undn' $}   \\
0   &    \text{ otherwise} .
\end{cases}}    
In particular for $\beta$ large enough the $(\bar \Psi_{\beta, z, \undn})_{(z,\undn)\in S_\lambda}$ are linearly independent, and span 
$\mathrm{Range} (P_{\beta,\eps_{\Lambda},\lambda})$ by Theorem~\ref{t:simon}.
Now construct $(\Psi_{\beta,z,\undn})_{(z,\undn)\in S_\lambda}$ by applying the Gram-Schmidt
procedure on $(\bar \Psi_{\beta,z,\undn})_{(z,\undn)\in S_\lambda}$. From \eqref{e:psipsi} and Lemma~\ref{l:pavese}-(ii)  (with $\varphi\equiv1$) it follows that
\bel{e:gs}{    \lim_\beta    \dangle{ \tilde \Psi_{\beta,z,\undn} }{\Psi_{\beta,z',\undn' } }_{L^2(m_\beta)}    =
\begin{cases}    
1   &     \text{ if   $z=z'$ and   $\undn=\undn' $}   \\
0   &    \text{ otherwise}
\end{cases}
      . }   
      Now apply Remark~\ref{r:hilbert} with $H=L^2(m_\beta)$, $v=\Psi_{\beta,z,\undn}$,
      $v'=\Psi_{\beta,z',\undn'}$,    $u=\tilde \Psi_{\beta,z,\undn}$, $u'=\tilde \Psi_{\beta,z',\undn'}$ and $T\equiv T_\varphi$ the pointwise multiplication by $\varphi$
      to get
      \bel{e:otto}{ &\left| \dangle{\Psi_{\beta,z,\undn}}{\varphi \Psi_{\beta,z',\undn'} }_{L^2(m_\beta)}  -\dangle{\tilde \Psi_{\beta,z,\undn}}{\varphi \tilde \Psi_{\beta,z',\undn'}}_{L^2(m_\beta)}  \right|\le \\
     &  2 \,\|\varphi\|_{\infty}       \left(  \sqrt{1-\dangle{\tilde \Psi_{\beta,z,\undn}}{\Psi_{\beta,z,\undn}}^2_{L^2(m_\beta)}}      +  \sqrt{1-\dangle{\tilde \Psi_{\beta,z',\undn'}}{\Psi_{\beta,z',\undn'}}^2_{L^2(m_\beta)}}     \right)            .     }
Thus we obtain \eqref{e:measconv2} from Lemma~\ref{l:pavese}-(ii) and \eqref{e:gs}.

Thus the $(\Psi_{\beta,z,\undn})_{(z,\undn)\in S_{\lambda}}$ are an orthonormal base of $\mathrm{Range} (P_{\beta,\eps_{\Lambda},\lambda})$ satisfying \eqref{e:measconv2}. Since this holds for any $\lambda$ such that $S_\lambda$ is not empty and $L^2(m_\beta)$ is a direct sum of eigenspaces of $L_\beta$, the $(\Psi_{\beta,z,\undn})_{(z,\undn)}$ are an orthonormal base of $L^2(m_\beta)$ as $\lambda$ runs over the positive real numbers such that $S_\lambda$ is nonempty.
\end{proof}

\subsection{Eigenfunctions of the split spectrum}
\label{ss:HKN}
We denote by $\ell_{\beta,0}, \dots, \ell_{\beta,n}$ the first $n+1$ eigenvalues of $-\tfrac 1\beta L_\beta$ counted with multiplicity. It follows from
the smoothness of $V$,~\ref{a:2},~\ref{a:3}  and general principles that  $\ell_{\beta, 0}$ equals zero for every $\beta$ and is simple  
with constant eigenfunction $\Phi_{\beta,0}\equiv1/\sqrt{Z_\beta}$.
Moreover, with the notation of the previous Subsection~\ref{ss:simon}, $\lambda_{x,\underline{0}} = 0$ whenever $x\in\wp_0$ is a local minimum,
and $\lambda_{z, \undn}>0$ otherwise. It follows then from Theorem~\ref{t:simon} that
there exist $\eps>0$ and $\beta_0>0$ such that for each $\beta \ge \beta_0$
\bel{e:spec}{
\mathrm{Spec}(-\tfrac{1}{\beta} L_\beta)\cap [0,\eps]= \left\{\ell_{\beta,0}= 0,\ell_{\beta,1},\ldots,\ell_{\beta,n} \right\} .   
}
It is not difficult to see  that the eigenvalues  
$\ell_{\beta,1}, \dots, \ell_{\beta,n}$ are indeed exponentially small in $\beta$. A much stronger statement, 
giving the precise leading behaviour of the $\ell_{\beta,k}$'s in a general setting encompassing hypotheses~\ref{a:1}-\ref{a:5}, was given in 
\cite{HKN}, (see also~\cite{BG} and~\cite{E}). 
The following theorem conveniently resumes in our framework a weak version of the main result in~\cite{HKN}.
 \begin{theorem}[Low-lying eigenvalues]
 \label{t:HKN}
For $k=1,\ldots,\,n$ the eigenvalue $\ell_{\beta,k}$ is simple, admits a normalized eigenfunction $\Phi_{\beta,k}\in C^\infty(M)$ and satisfies, for $\eta_k$ as in \eqref{e:coeffk},
\bel{e:lambdai}{
 \lim_{\beta}e^{ \beta W_k} 
\ell_{\beta,k}   =  \frac 12 \eta_k(x_k).
}
\end{theorem}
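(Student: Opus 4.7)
The statement is essentially a restatement in our framework of the sharp asymptotic for low-lying eigenvalues of the Witten Laplacian established in~\cite{HKN}, so one option is simply to invoke that result after verifying that assumptions \ref{a:1}--\ref{a:5} match theirs. I describe instead how I would organize a self-contained proof along the same conceptual lines, since the method will guide us in what follows.

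The starting point is the ground state conjugation \eqref{e:Witten}, which identifies $-\tfrac{1}{\beta} L_\beta$, up to a factor of $\beta^{-1}$, with the $0$-form Witten Laplacian $\Delta_V^{(0)} = d_V^* d_V$, where $d_V = e^{-\beta V/2} d\, e^{\beta V/2}$. Since the first eigenvalue $\ell_{\beta,0} = 0$ is simple with a constant eigenfunction on $L^2(m_\beta)$, one works on the $L^2(m_\beta)$-orthogonal complement of the constants. The crucial structural fact is the Witten--Hodge intertwining $d_V \Delta_V^{(0)} = \Delta_V^{(1)} d_V$, which puts the nonzero small eigenvalues of $\Delta_V^{(0)}$ in bijection with the small eigenvalues of the Witten Laplacian on $1$-forms $\Delta_V^{(1)}$. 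The latter are, by semiclassical harmonic approximation at the index-$1$ critical points, localized precisely at the saddle points $\hat x_k$.

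The second step is the construction of two families of quasimodes. On the $0$-form side, for each $k=1,\ldots,n$ one builds $\varphi_k \in C^\infty(M)$ by normalizing a smooth cutoff of $\mathbf{1}_{E_k} e^{-\beta V/2}$, where $E_k$ is the connected component of $\{V \le V(x_k) + W_k - \delta\}$ containing $x_k$ (for $\delta>0$ small). By the labelling \eqref{e:order} and assumption \ref{a:4}, the $E_k$'s are geometrically well-separated from the deeper wells by a unique saddle $\hat x_k$. On the $1$-form side, one builds quasimodes $\psi_k$ concentrated at $\hat x_k$ using local harmonic oscillator eigenforms as in Lemma~\ref{l:pavese}. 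A short computation, combined with the sharp prefactor formula, gives
\begin{equation*}
\ell_{\beta,k} \;=\; \inf_{F \subset \mathrm{span}(\varphi_1,\ldots,\varphi_k)} \sup_{u \in F}\; \frac{\|d_V u\|^2}{\|u\|^2} \,(1+o(1)),
\end{equation*}
so that the eigenvalues are controlled by the Gram-type matrix of $d_V$-pairings of the $\varphi_k$ with the $\psi_j$. Laplace asymptotics applied to $\|\varphi_k\|^2$ at the minimum $x_k$ produces $\prod_i \xi_i(x_k)^{-1/2}$, while Laplace asymptotics applied to $\|d_V \varphi_k\|^2$ at the saddle $\hat x_k$ produces $e^{-\beta W_k}$ times $|\xi_1(\hat x_k)| \prod_{i\ge 2} \xi_i(\hat x_k)^{-1/2}$; combining these yields exactly the prefactor $\tfrac{1}{2}\eta_k(x_k)$.

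The main obstacle is decoupling: one must show that, modulo exponentially smaller errors, the matrix of $d_V$-pairings is upper triangular in the ordering \eqref{e:order}, so that the quasimodes $\varphi_k$ can be promoted to genuine eigenfunctions with the advertised eigenvalues. This is where assumptions \ref{a:4} and \ref{a:5} are essential: \ref{a:5} yields exponential separation of the distinct scales $e^{-\beta W_k}$, while \ref{a:4} guarantees that each $\varphi_k$ is routed through a \emph{single} saddle, so that the Laplace asymptotic at $\hat x_k$ is unambiguous. The error estimates rest on Agmon-type exponential decay for the true eigenfunctions of $\Delta_V^{(0)}$ and $\Delta_V^{(1)}$ away from the respective critical points, which control the $L^2(m_\beta)$-norms of the remainders $(-\tfrac{1}{\beta} L_\beta - \ell_{\beta,k})\varphi_k$ and allow one to pass from quasimodal estimates to sharp eigenvalue asymptotics by the usual spectral projection argument (cf.\ the proof of Proposition~\ref{p:vittorini}).
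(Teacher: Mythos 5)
The paper does not prove Theorem~\ref{t:HKN}: it is stated as a direct restatement, in the present framework, of the main result of~\cite{HKN} (see also~\cite{BG,E}), which is exactly the option you identify in your first sentence. Your subsequent sketch is an accurate condensation of the supersymmetric argument of~\cite{HKN} itself: the Witten intertwining $d_V\Delta_V^{(0)}=\Delta_V^{(1)}d_V$, the two families of quasimodes attached to $x_k$ and $\hat x_k$, the Laplace asymptotics producing (after the rescaling $h=2/\beta$) the prefactor $\tfrac 12\eta_k(x_k)$ consistent with~\eqref{e:coeffk}, and the role of~\ref{a:4}--\ref{a:5} in decoupling the scales. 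The only imprecision is the min-max display: as written it is taken over a $k$-dimensional span omitting the ground state, and in any case would only yield the upper bound; the matching lower bound (hence the claimed $(1+o(1))$ equality) requires the Agmon-decay and spectral-projection estimates you mention afterwards, which is precisely where the technical work in~\cite{HKN} lies.
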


\begin{lemma}\label{l:montale}
For $k=1, \dots, n$ there exists $\tilde\Phi_{\beta, k}\in C^\infty_c( M)$ with $\|\tilde\Phi_{\beta, k} \|_{L^2(m_\beta)}=1$, such that
\begin{itemize}
\item[(i)]  the probability measure $\tilde \Phi^2_{\beta,k}  dm_\beta$ converges to $\delta_{x_k}$ in $\mc P(M)$.
\item[(ii)]  $\lim_{\beta \to \infty}\dangle{\tilde\Phi_{\beta, k}}{\Phi_{\beta, k}}^2 =  1   $ . 
\end{itemize}
\end{lemma}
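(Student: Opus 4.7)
The plan is to take $\tilde\Phi_{\beta,k} := \chi_k/\|\chi_k\|_{L^2(m_\beta)}$ where $\chi_k\in C^\infty_c(M)$ is a smooth cutoff adapted to the barrier at $x_k$. First I would fix $\epsilon \in (0,(W_k-W_{k+1})/2)$ (with the convention $W_{n+1}:=0$) and let $E_k$ be the connected component of the sublevel set $\{V<V(x_k)+W_k-\epsilon\}$ containing $x_k$. By~\eqref{e:fw2} and Assumption~\ref{a:4}, $E_k$ contains no critical point of $V$ with value $\le V(x_k)$ other than $x_k$; any other local minimum $x_j\in E_k$ necessarily has $V(x_j)>V(x_k)$, and a direct computation using paths in $E_k$ shows $W_j<W_k$, i.e.\ $j>k$. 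I would then choose $\chi_k\equiv 1$ on a smaller component $E_k'\subset E_k$ around $x_k$, supported in $E_k$, and arranged to vanish on small balls around each such shallower minimum $x_j\in E_k$, $j>k$.

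Property~(i) is then immediate from Laplace asymptotics, since $x_k$ is the strict global minimum of $V$ on $\mathrm{supp}(\chi_k)$. For~(ii), I would decompose $\tilde\Phi_{\beta,k}=\sum_{j=0}^{n}c_{kj,\beta}\Phi_{\beta,j}+R_{k,\beta}$ with $R_{k,\beta}$ orthogonal to $V_{\mathrm{low}}:=\mathrm{span}\{\Phi_{\beta,j}\}_{j=0}^{n}$. The Dirichlet-form identity
\[
\dangle{\tilde\Phi_{\beta,k}}{-\tfrac{1}{\beta}L_\beta\tilde\Phi_{\beta,k}}=\frac{1}{\beta\|\chi_k\|_{L^2(m_\beta)}^{2}}\int|\nabla\chi_k|^2\,dm_\beta
\]
together with the fact that $\nabla\chi_k$ is supported where $V\ge V(x_k)+W_k-2\epsilon$ (modulo the cut-out balls, treated separately) gives an energy bound of order $e^{-\beta(W_k-2\epsilon)}\beta^{d/2-1}$. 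The spectral gap from Theorem~\ref{t:simon} then forces $\|R_{k,\beta}\|\to 0$, while the same energy bound combined with $\ell_{\beta,j}\sim\tfrac12\eta_j(x_j)e^{-\beta W_j}$ from Theorem~\ref{t:HKN} gives $c_{kj,\beta}^2\lesssim e^{-\beta(W_k-W_j-2\epsilon)}\to 0$ for every $j>k$, since $W_j\le W_{k+1}<W_k-2\epsilon$.

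The vanishing of $c_{kj,\beta}$ for $0\le j<k$ I would handle by induction on $k$. The inductive hypothesis gives the decomposition $\Phi_{\beta,j}=\sigma_{j,\beta}\tilde\Phi_{\beta,j}+e_{j,\beta}$ with $\sigma_{j,\beta}^2\to 1$ and $\|e_{j,\beta}\|\to 0$, from which $\Phi_{\beta,j}^2\,dm_\beta\to\delta_{x_j}$ follows by a direct computation. The construction of $E_k$ in the first paragraph guarantees $x_j\notin\overline{E_k}$ whenever $j<k$: in the case $V(x_j)\le V(x_k)$ the point $x_j$ is separated from $x_k$ by the barrier $V(x_k)+W_k$ (by the definition of $W_k$), while in the case $V(x_j)>V(x_k)$ a path inside $E_k$ would force $W_j<W_k$, contradicting $j<k$. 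Cauchy--Schwarz applied to $c_{kj,\beta}=\int_{E_k}\tilde\Phi_{\beta,k}\Phi_{\beta,j}\,dm_\beta$ then yields $c_{kj,\beta}^2\le \int_{E_k}\Phi_{\beta,j}^2\,dm_\beta\to 0$. The base case $j=0$ reduces to $c_{k0,\beta}=Z_\beta^{-1/2}\int\tilde\Phi_{\beta,k}\,dm_\beta\to 0$, which is immediate from Laplace asymptotics since $V(x_k)>V(x_0)$. Combining with $\sum_j c_{kj,\beta}^2+\|R_{k,\beta}\|^2=1$ then forces $c_{kk,\beta}^2\to 1$, which is~(ii).

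The main technical obstacle is the careful construction of $\chi_k$: higher-index minima $x_j\in E_k$ contribute, via the cut-out balls, gradient terms $\sim e^{-\beta(V(x_j)-V(x_k))}$ with $V(x_j)-V(x_k)\in(0,W_k-\epsilon)$, which can a priori dominate the boundary contribution $e^{-\beta(W_k-2\epsilon)}$ needed for the energy estimate in the second paragraph. Keeping them subdominant requires either a delicate tuning of the ball radii (balancing the $r^{-2}$ gradient factor against the $r^{d-1}$ Laplace volume) or a WKB-style correction factor $e^{\beta(V(x_k)-V)/2}$ localized near each such $x_j$. Once this is in place, the spectral arguments above are routine.
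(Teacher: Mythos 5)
Your overall strategy — a normalized cutoff quasimode $\chi_k/\|\chi_k\|_{L^2(m_\beta)}$, an energy bound via the Dirichlet form, and a spectral decomposition combined with Theorems~\ref{t:simon} and~\ref{t:HKN} — is essentially the paper's. But you have introduced a self-inflicted and unresolved difficulty that the paper simply avoids.

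The cut-out balls around the shallower minima $x_j\in E_k$ with $j>k$ are unnecessary, and, as you correctly notice, they wreck the energy estimate: near such an $x_j$, $\nabla\chi_k$ lives at energy level $V(x_j)-V(x_k)$, which can be far below $W_k-2\epsilon$, so the term $\frac{1}{Z_{\beta,k}}\int |\nabla\chi_k|^2 e^{-\beta V}$ is no longer $O(e^{-\beta(W_k-2\epsilon)}\beta^{d/2})$. You flag this as ``the main technical obstacle'' and suggest tuning the ball radii or adding a WKB correction, but neither is carried out, and neither is needed. Because $x_k$ is the unique \emph{global} minimum of $V$ on the component $E_k$ (any other $x_j\in E_k$, $j>k$, has $V(x_j)>V(x_k)$, as you yourself prove in your first paragraph), the Laplace asymptotics giving $\tilde\Phi_{\beta,k}^2\,dm_\beta\to\delta_{x_k}$ and $Z_{\beta,k}\approx e^{-\beta V(x_k)}\beta^{-d/2}$ are already valid for the plain cutoff $\chi_k$ that is identically $1$ on the inner sublevel set and vanishes only outside the outer one. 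Excising the shallower minima is not what controls the coefficients $c_{kj,\beta}$ for $j>k$; those are controlled (as in your second paragraph) by the energy bound and the lower bound $\ell_{\beta,j}\gtrsim e^{-\beta W_j}$, with no pointwise support separation required. This is exactly what the paper does: $\chi_k\equiv 1$ on (the component of) $\{V<V(\hat x_k)-\delta\}$ and $\chi_k\equiv 0$ outside $\{V<V(\hat x_k)\}$, with no excision, and $\nabla\chi_k$ then genuinely lives only at energy $\ge V(\hat x_k)-\delta$.

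With the excision removed, the rest of your argument goes through and is a legitimate variant of the paper's. For the coefficients with $j<k$, the paper uses a direct algebraic expansion of $1-\dangle{\tilde\Phi_{\beta,k}}{\Phi_{\beta,k}}^2$ in terms of the overlaps $\dangle{\tilde\Phi_{\beta,k}}{\tilde\Phi_{\beta,j}}$ (which vanish by Laplace asymptotics even when supports overlap, since $V(x_j)\ne V(x_k)$) plus the inductive error $1-\dangle{\tilde\Phi_{\beta,j}}{\Phi_{\beta,j}}^2$; you instead strengthen the inductive hypothesis to include $\Phi_{\beta,j}^2\,dm_\beta\to\delta_{x_j}$ and then kill $c_{kj,\beta}$ by Cauchy--Schwarz and the localization $x_j\notin\overline{E_k}$. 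Both are fine; yours is arguably cleaner but proves something the paper only deduces afterwards (Proposition~\ref{p:quasimodo}) inside the induction. The geometric claims you need ($x_j\notin\overline{E_k}$ for $j<k$, and $V(x_j)>V(x_k)$ with $W_j<W_k$ for $j>k$ with $x_j\in E_k$) are correct and argued essentially as in the paper.
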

\begin{proof}
In this proof the $W_k$'s are defined as in \ref{a:4}-\ref{a:5}, and we set for convenience $W_{n+1}=0$
and $\ell_{\beta,n+1}=\eps$, with $\eps$ as in \eqref{e:spec} (note that this is \emph{not} an eigenvalue of $-\tfrac{1}{\beta}L_\beta$).

Fix $\delta>0$ such that
\bel{e:delta}{
\delta < \min_{k=1,\ldots,n} (W_{k} - W_{k+1}).
}
Let $B_k\subset M$, respectively  $B_{k, \delta}$, be the connected component of $V^{-1}((-\infty, V(\hat x_k)))$, respectively $V^{-1}((-\infty, V(\hat x_k) -\delta))$, containing $x_k$. The $B_k$'s are precompact by Remark~\ref{r:helff}, and 
the closure of $B_{k, \delta}$ is contained in $B_k$. In particular there exists $\chi_k\in C_c^\infty(M;[0,1])$ such that $\chi_k \equiv 1$ on $B_{k,\delta}$ and $\chi_k \equiv 0$ on $M \setminus B_{k}$.

For $k=1,\ldots,n$ define $\tilde \Phi_{\beta,k}=\chi_k/\sqrt{Z_{\beta,k}}$ where
\bel{e:zbetak}{
Z_{\beta,k}=\int e^{-\beta V(x)}\chi_{k}^2(x)\,dx .
}
$V(x_k)$ is the unique minimum of $V(x)$ on $B_k$, and $\chi(x_k)=1$.
Therefore (i) holds. By the Laplace principle
\bel{e:laplace}{
 \lim_\beta \frac{1}{\beta} \log Z_{\beta,k} = -V(x_k) ,
}
\bel{e:laplace3}{
\lim_\beta\dangle{ \tilde \Phi_{\beta,j}}{\tilde \Phi_{\beta,k}}=0\qquad j,k=1,\ldots,n,\quad j\neq k,
}
\bel{e:laplace2}{
\lim_\beta\dangle{ \Phi_{\beta,0}}{\tilde \Phi_{\beta,k}}=0\qquad k=1,\ldots,n.
}
Note moreover that
\bel{e:stimagrad}{
\inf_{x\,:\: |\nabla \chi_k(x)|>0} V(x) \ge V(\hat x_k)-\delta= V(x_k)+W_k-\delta .
}
Denote now by $P_{\beta,k}:= \un {[\ell_{\beta,k+1},+\infty)}(-\tfrac{1}{\beta}L_\beta)$ the spectral projection of $-\tfrac{1}{\beta}L_\beta$ associated to the interval $[\ell_{\beta,k+1},+\infty)$. By spectral decomposition
\bel{e:proj}{
P_{\beta,k} \Phi=\Phi-\sum_{j=0}^k \dangle{\Phi}{\Phi_{\beta,j}} \Phi_{\beta,j}, \qquad     \forall \Phi\in L^2(m_\beta).
}
The Markov inequality for spectral projections, an integration by parts and \eqref{e:stimagrad} yield
\bel{e:markov1}{
\left\|P_{\beta,k} \tilde \Phi_{\beta,k} \right\|_{L^2(m_\beta)}^2 & \le  \frac{1}{\ell_{\beta,k+1}}
   		\dangle{\tilde \Phi_{\beta,k} }{-L_{\beta} \tilde \Phi_{\beta,k}}
= \frac{1}{\ell_{\beta,k+1}Z_{\beta,k}} \int  \!\! |\nabla \chi_k|^2\! (x) e^{-\beta V(x)} \!dx
\\ 
& \le C_{k} \frac{\exp(-\beta (V(x_k)+W_k-\delta))}{\ell_{\beta,k+1}\, Z_{\beta,k}} ,
}
where $C_k=\int |\nabla \chi_k|^2(x)dx<\infty$. The estimates of $\ell_{\beta,k+1}$ and $Z_{\beta,k}$ provided respectively in \eqref{e:lambdai} and \eqref{e:laplace}, and the choice \eqref{e:delta} of $\delta$ thus imply 
\bel{e:markov2}{
\lim_\beta \left\|P_{\beta,k} \tilde \Phi_{\beta,k} \right\|_{L^2(m_\beta)}=0  .
}
Taking the scalar product with $\tilde\Phi_{\beta,k}$ in both sides of \eqref{e:proj} calculated for $\Phi=\tilde\Phi_{\beta,k}$, gives
\bel{e:markov4}{
1- \dangle{\tilde \Phi_{\beta,k}}{\Phi_{\beta,k}}^2 
 & =\dangle{ \tilde \Phi_{\beta,k} }{P_{\beta,k} \tilde \Phi_{\beta,k}}
+ \sum_{j=0}^{k-1} \dangle{\tilde \Phi_{\beta,k}}{\Phi_{\beta,j}}^2
 \\ & 
=\|P_{\beta,k} \tilde \Phi_{\beta,k} \|_{L^2(m_\beta)}^2
+\dangle{\tilde \Phi_{\beta,k}}{\Phi_{\beta,0}}^2
\\
& \phantom{=}
 +\sum_{j=1}^{k-1} \Big(\dangle{\tilde \Phi_{\beta,j}}{\Phi_{\beta,j}} \dangle{\tilde \Phi_{\beta,k}}{\tilde \Phi_{\beta,j}} 
\\
&\phantom{=+\sum_{j=1}^{k-1} \Big(}
+ \dangle{\tilde \Phi_{\beta,k}}{\Phi_{\beta,j}-   \dangle{\tilde \Phi_{\beta,j}}{\Phi_{\beta,j}}\tilde \Phi_{\beta,j}} \Big)^2
\\
& \le \|P_{\beta,k} \tilde \Phi_{\beta,k} \|_{L^2(m_\beta)}^2
+\dangle{\tilde \Phi_{\beta,k}}{\Phi_{\beta,0}}^2
\\
& \phantom{=} +2\sum_{j=1}^{k-1}  \dangle{\tilde \Phi_{\beta,k}}{\tilde \Phi_{\beta,j}}^2
+1-\dangle{\tilde \Phi_{\beta,j}}{\Phi_{\beta,j}}^2 
\\
&=o_\beta(1)+2 \sum_{j=1}^{k-1} \left(1-\dangle{\tilde \Phi_{\beta,j}}{\Phi_{\beta,j}}^2\right),
}
where in the last line we used \eqref{e:laplace3}, \eqref{e:laplace2} and \eqref{e:markov2}.

In order to prove (ii), we now proceed by finite induction over $k$. The statement (ii) holds for $k=1$ as the sum in the last line of \eqref{e:markov4} is actually empty. But then from \eqref{e:markov4} one immediately yields the inductive step.

\end{proof}

\begin{proposition}\label{p:quasimodo}
For $k=0, \dots,n $ let $\Phi_{\beta,k}$ be a normalized eigenfunction corresponding to $\ell_{\beta,k}$ and define the probability measure $m_{\beta,k}$ by
\bel{e:mbetak}{
d m_{\beta,k}= \Phi^2_{\beta,k}\,  dm_\beta    .
}
Then $m_{\beta,k}$ converges to $\delta_{x_k}$ in $\mc P(M)$ as $\beta \to \infty$. In particular for $h\neq k$ it holds
\bel{e:mbetahk}{
\lim_{\beta} \int  |\Phi_{\beta,h}\,\Phi_{\beta,k}|  dm_\beta=0   .
}
\end{proposition}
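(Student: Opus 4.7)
The plan is to separate the two assertions: first establish the narrow convergence $m_{\beta,k}\to \delta_{x_k}$ for each $k=0,\ldots,n$, and then deduce \eqref{e:mbetahk} by a soft argument based on disjoint neighborhoods. The concentration is the real content.

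For $k=0$, I would simply observe that $\Phi_{\beta,0}\equiv 1/\sqrt{Z_\beta}$ is constant, so $m_{\beta,0}$ is literally the Gibbs measure $Z_\beta^{-1}e^{-\beta V}\,dx$. By \ref{a:1}-\ref{a:3} and Remark~\ref{r:helff}, $V$ has compact sublevel sets with unique global minimizer $x_0$, and the standard Laplace principle yields $m_{\beta,0}\to \delta_{x_0}$. For $k=1,\ldots,n$, the strategy is to transfer the concentration of the explicit quasimode $\tilde\Phi_{\beta,k}$ from Lemma~\ref{l:montale}(i) to the genuine eigenfunction through the asymptotic alignment in Lemma~\ref{l:montale}(ii). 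Setting $a_\beta:=\dangle{\Phi_{\beta,k}}{\tilde\Phi_{\beta,k}}$, I would write the $L^2(m_\beta)$-orthogonal decomposition $\Phi_{\beta,k} = a_\beta\,\tilde\Phi_{\beta,k} + r_\beta$, so that $a_\beta^2\to 1$ forces $\|r_\beta\|_{L^2(m_\beta)}^2 = 1-a_\beta^2 \to 0$. For any bounded uniformly continuous test function $\varphi$, expanding the square gives
\[
\int \varphi\,\Phi_{\beta,k}^2\,dm_\beta = a_\beta^2 \int \varphi\,\tilde\Phi_{\beta,k}^2\,dm_\beta + 2 a_\beta \int \varphi\,\tilde\Phi_{\beta,k}\,r_\beta\,dm_\beta + \int \varphi\,r_\beta^2\,dm_\beta,
\]
where the first term tends to $\varphi(x_k)$ by Lemma~\ref{l:montale}, while Cauchy--Schwarz bounds the remaining two in absolute value by $2\|\varphi\|_\infty\|r_\beta\|_{L^2(m_\beta)}$ and $\|\varphi\|_\infty\|r_\beta\|_{L^2(m_\beta)}^2$ respectively, both $o(1)$.

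For \eqref{e:mbetahk}, once $m_{\beta,j}\to \delta_{x_j}$ is established for every $j$, I would fix $h\neq k$ and pick disjoint open neighborhoods $U_k\ni x_k$ and $U_h\ni x_h$ (possible since all local minima are distinct). Portmanteau applied to the narrow convergence yields $c_{\beta,j}:=\int_{M\setminus U_j}\Phi_{\beta,j}^2\,dm_\beta \to 0$ for $j\in\{h,k\}$. Splitting $M = U_k \sqcup U_h \sqcup \bigl(M\setminus(U_k\cup U_h)\bigr)$ and applying Cauchy--Schwarz on each piece, together with the inclusions $U_k\subset M\setminus U_h$ and $M\setminus(U_k\cup U_h)\subset M\setminus U_k$, gives
\[
\int |\Phi_{\beta,h}\Phi_{\beta,k}|\,dm_\beta \;\le\; \sqrt{c_{\beta,h}} + \sqrt{c_{\beta,k}} + \sqrt{c_{\beta,k}} \longrightarrow 0.
\]

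No serious obstacle is anticipated: all the delicate semiclassical content is already packaged inside Lemmas~\ref{l:pavese} and~\ref{l:montale}, and the present proposition is essentially a clean-up converting $L^2(m_\beta)$-alignment with the quasimode into narrow convergence of the associated probability measures. The only mild care needed is that for noncompact $M$ the topology on $\mc P(M)$ must be tested against bounded uniformly continuous functions, which is harmless since every estimate above passes through $\|\varphi\|_\infty$.
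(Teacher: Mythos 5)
Your proposal is correct and follows essentially the same route as the paper: the $k=0$ case is immediate from the Laplace principle, the concentration for $k\geq 1$ is obtained by transferring from $\tilde\Phi_{\beta,k}$ to $\Phi_{\beta,k}$ via their $L^2(m_\beta)$-alignment (the paper packages your orthogonal-decomposition-plus-Cauchy--Schwarz estimate into Remark~\ref{r:hilbert}), and \eqref{e:mbetahk} follows by splitting $M$ into neighborhoods and applying Cauchy--Schwarz. The only cosmetic difference is that the paper splits $M$ into just two pieces $E$ and $E^c$ (a single neighborhood of $x_h$ avoiding $x_k$), giving a two-term bound rather than your three, but the argument is the same.
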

\begin{proof}
The statement~\eqref{e:mbetak} is immediate for $k=0$, since we assumed $x_0$ to be the unique global minimizer of $V$. Let $k\ge 1$, take $\tilde \Phi_{\beta,k}$ as in Lemma~\ref{l:montale} and fix a test function $\varphi \in C_b(M)$. Now apply Remark~\ref{r:hilbert} with $H=L^2(m_\beta)$, $v=v'=\Phi_{\beta,k}$, $u=u'=\tilde \Phi_{\beta,k}$, and $T\equiv T_\varphi$ the pointwise multiplication by $\varphi$. We get from \eqref{e:tt}
\bel{e:conv1}{
\left| \int \varphi\, dm_{\beta,k}- \int  \varphi\,\tilde \Phi_{\beta,k}^2\, dm_\beta \right|
& =\left| \dangle{\Phi_{\beta,k}}{T_\varphi \Phi_{\beta,k}}-\dangle{\tilde \Phi_{\beta,k}}{T_{\varphi} \tilde \Phi_{\beta,k}}\right|
\\ & 
\le 4\,\|\varphi\|_{C(M)}
	\sqrt{1-\dangle{\tilde \Phi_{\beta,k}}{\Phi_{\beta,k}}^2}.
}
This vanishes by Lemma~\ref{l:montale}-(ii). Therefore by Lemma~\ref{l:montale}-(i)
\bel{e:conv2}{
\lim_\beta \int \varphi \,dm_{\beta,k}=\lim_\beta \int \varphi\,\tilde \Phi_{\beta,k}^2 \, dm_\beta(x)=
\varphi(x_k),
}
namely $m_{\beta,k}\to \delta_{x_k}$ as $\beta\to \infty$. Now \eqref{e:mbetahk} is an immediate consequence of this fact, since for $E\subset M$ a neighborhood of $x_h$ not containing $x_k$
\bel{e:hkpr}{
\int  |\Phi_{\beta,h}\,\Phi_{\beta,k}|  dm_\beta
\le \left(\int_{E}|\Phi_{\beta,k}|^2 dm_\beta\right)^{1/2}+\left(\int_{E^c}|\Phi_{\beta,h}|^2 dm_\beta\right)^{1/2},
}
and both terms vanish as $\beta\to\infty$.
\end{proof}

\section{$\Gamma$-convergence of $I_\beta$}
\label{s:gammauno}
\begin{proposition}
\label{p:dvcoer}
Assume $V\in C^2(M)$ and \ref{a:2}. Then $(I_\beta)$ is equicoercive.
\end{proposition}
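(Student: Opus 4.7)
When $M$ is compact, $\mc P(M)$ is itself compact in the weak topology and there is nothing to prove, so I would focus on the case $M = \bb R^d$ and establish tightness of any sequence $(\mu_\beta)$ with $\varlimsup_\beta I_\beta(\mu_\beta) \le C < \infty$; the conclusion then follows from Prokhorov's theorem.

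The key ingredient is the explicit representation from Remark~\ref{r:lsc} applied to the potential $\beta V$. Combined with $I_\beta = \beta^{-2} I^{\beta V}$ from \eqref{e:ibeta2}, this gives, whenever $d\mu = h^2 dx$ with $h \in H^1(\bb R^d)$,
\[
I_\beta(\mu) = \int \left( \tfrac{2}{\beta^2}|\nabla h|^2 + 2h^2 \phi_\beta \right) dx,\qquad \phi_\beta(x) := \tfrac{1}{4}|\nabla V|^2(x) - \tfrac{1}{2\beta}\Delta V(x),
\]
while $I_\beta(\mu) = +\infty$ if no such density exists. The function $\phi_\beta$ is precisely the one controlled by assumption~\ref{a:2}. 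Discarding the non-negative gradient term yields the pointwise lower bound $I_\beta(\mu) \ge 2\int \phi_\beta \, d\mu$.

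To conclude, I would consider $\Phi(x) := \inf_{\beta \ge \beta_0} \phi_\beta(x)$, which is lower semicontinuous, bounded below on $\bb R^d$ by some $L$ (since $|\nabla V|^2$ and $\Delta V$ are bounded on compacta, and $\Phi \to +\infty$ at infinity by \ref{a:2}), and satisfies $N_R := \inf_{|x|\ge R} \Phi(x) \to +\infty$ as $R \to \infty$. For any $\mu$ with $I_\beta(\mu) \le C$ and any $R > 0$,
\[
N_R \, \mu(\{|x|\ge R\}) + L \le \int \phi_\beta \, d\mu \le \tfrac{C}{2},
\]
so $\mu(\{|x|\ge R\}) \le (C/2 - L)/N_R$, uniformly in $\beta \ge \beta_0$. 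Letting $R \to \infty$ gives equi-tightness.

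The main technical point is verifying the uniform lower bound on $\Phi$, i.e.\ that the infimum over $\beta$ does not produce $-\infty$ on compact sets. This is immediate from continuity of $\nabla V$ and $\Delta V$ together with $\beta \ge \beta_0 > 0$; no subtle cancellation is required. Everything else is a direct reading of \ref{a:2} through the Schrödinger form \eqref{e:ex2} of the Fisher information.
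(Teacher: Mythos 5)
Your proof is correct and follows essentially the same strategy as the paper: both arrive at the lower bound $I_\beta(\mu) \ge \int\bigl(\tfrac12|\nabla V|^2 - \tfrac1\beta\Delta V\bigr)\,d\mu$ and then deduce tightness from \ref{a:2} via Chebyshev and Prokhorov. The paper obtains this bound by inserting $\psi=\beta\nabla V$ into the dual variational form \eqref{e:ex2}, while you discard the nonnegative gradient term in the primal (Schr\"odinger) form of \eqref{e:ex2} --- two routes to the identical estimate. (Minor slip: an infimum of continuous functions is \emph{upper}, not lower, semicontinuous; fortunately here $\Phi=\min\bigl(\phi_{\beta_0},\tfrac14|\nabla V|^2\bigr)$ is actually continuous, so the uniform lower bound $\Phi\ge L$ you invoke is still valid.)
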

\begin{proof}
Let $R(x):=\inf_{\beta \ge \beta_0} |\nabla V|^2(x)-\tfrac{2}{\beta} \Delta V(x)$. Taking $\psi=\beta\,\nabla V$ in the first line of \eqref{e:ex2} and
recalling \eqref{e:ibeta2}, one obtains for $\beta \ge  \beta_0$ and each $\mu\in \mc P(M)$,
\bel{e:coerc1}{
I_\beta(\mu)
& \ge \beta^{-2} \int d\mu \left ( \beta^2  |\nabla V|^2- \beta \Delta V-\tfrac{\beta^2}2  |\nabla V|^2\right)
\\ & =
 \tfrac 12 \int d\mu \left(|\nabla V|^2- \tfrac{2}{\beta} \Delta V\right)
\ge \tfrac 12 \int d\mu\,R . 
}
In particular, for any $t>0$ and $\mu \in \mc P(M)$,
it holds $\mu (\{R \le t\})\ge 1-2t^{-1} I_\beta(\mu)$. By \ref{a:2}, $R$ has compact sublevel sets, therefore any family $(\mu_\beta)\subset \mc P(M)$
such that $(I_\beta(\mu_\beta))$ is uniformly bounded, is tight. And, by Prohorov's Theorem, precompact.
\end{proof}

\begin{proof}[Proof of \eqref{e:gc1}, $\Gamma$-liminf]
Assume that $\mu_\beta \to \mu$. Then, replacing $\psi$ by $\beta\,\psi$ in \eqref{e:ex2}, one obtains for each $\psi\in \Omega$
\bel{e:stima11}{
I_\beta(\mu_\beta) \ge  \int d\mu_\beta \left(\nabla V \cdot \psi -\tfrac 1\beta \nabla \cdot \psi - \tfrac 12   \,\psi^2\right),
}
which implies
\bel{e:stima12}{
\varliminf_\beta I_\beta(\mu_\beta) \ge  \int d\mu \left(\nabla V \cdot \psi - \tfrac 12   \,\psi^2\right).
}
Optimizing over $\psi$ we get the result.
\end{proof}

\begin{proof}[Proof of \eqref{e:gc1}, $\Gamma$-limsup]
We proceed in three steps.

\textit{Step1}. First assume $\mu=\delta_{\bar x}$ for some $\bar x\in M$. Take $U\equiv U_{\bar x} \in C^2(M;[0,+\infty))$ a smooth function such that
\begin{itemize}
\item[(i)] $U(x)=\mathrm{distance}(x,\bar x)^2$ in a neighborhood of $\bar x$, and $U$ is strictly positive elsewhere.
\item[(ii)] If $M=\bb R^d$, then $U(x)\ge |x|^2+|\nabla V|^2(x)+|\Delta V|(x)$ for $x$ large enough and $\nabla\exp(-U)\in L^2(dx)$ (it is immediate to check that such a $U$ exists).
\end{itemize}
Let then
\bel{e:mubetadelta}{
\mu_{\beta,\bar x}(dx)=\frac{1}{C_{\beta}} \exp(-2\beta\,U(x))dx.
}
By the Laplace principle $\mu_{\beta,\bar x} \to \delta_{\bar x}$. On the other hand an explicit calculation shows
\bel{e:glimsupc}{
I_\beta(\mu_{\beta,\bar x})=\int d\mu_{\beta,\bar x} \,\left(\frac{|\nabla V|^2}2+2  |\nabla U|^2-\frac{1}{\beta}\Delta V\right).
}
If $M$ is compact, then
\bel{e:resto1}{
\lim_\beta \int d\mu_{\beta,\bar x} \,|\nabla U|^2 =|\nabla U|^2(\bar x)=0,
}
\bel{e:resto2}{
\lim_\beta \int d\mu_{\beta,\bar x}\, \Delta V =\Delta V(\bar x),
}
so that the second and third term in the left hand side of \eqref{e:glimsupc} vanish to get
\bel{e:ibetalimsup}{
\varlimsup_\beta I_\beta(\mu_{\beta,\bar x}) =\varlimsup_\beta \int d\mu_{\beta,\bar x} \,\frac{ |\nabla V|^2}2 = \frac{ |\nabla V|^2(\bar x)}2=I(\delta_{\bar x}).
}
If $M=\bb R^d$, \eqref{e:resto1}, \eqref{e:resto2} and \eqref{e:ibetalimsup} still hold if we restrict the integral to a (large) ball $B$ containing $\bar x$. Since  $e^{-2\beta U}/C_\beta$ vanishes pointwise out of $B$, 
it is enough to show that the integrands in 
 \eqref{e:resto1}, \eqref{e:resto2} and \eqref{e:ibetalimsup} are uniformly integrable. This is easily established by condition (ii) above.

\textit{Step2}. If $\mu=\sum_{k=1}^N \alpha_k \delta_{\bar x_k}$, then define $\mu_{\beta}=\sum_{k=1}^N \alpha_k \mu_{\beta,\bar x_k}$, where $\mu_{\beta,\bar x}$ is defined as in \eqref{e:mubetadelta}. Clearly $\mu_\beta \to \mu$. On the other hand, since $I_\beta$ is convex and $I$ is linear, again $\varlimsup_\beta I_\beta(\mu_\beta)\le I(\mu)$.

\textit{Step3}. By a standard diagonal argument, it is now enough to show that finite convex combinations of Dirac masses are $I$-dense in $\mc P(M)$. This is trivial, since $I$ is linear.
\end{proof}

\section{First order development by $\Gamma$-convergence}
\label{s:gammadue}

In this section we prove \eqref{e:gc2}.
The following remark is a restatement of \eqref{e:ex2}.
\begin{remark}
\label{r:ipure}
If $\mu=\varrho m_\beta$ and $h\in C^2(M)$ is such that $h^2=\varrho$, then 
\bel{e:idir}{
I_\beta(\mu)=-\frac{2}{\beta^2} \int dm_\beta(x) h(x) (L_\beta h)(x).
}
In particular if  $d\mu=h^2 dm_\beta$ and $h$ is an $L^2(m_\beta)$-normalized eigenfunction of $-\tfrac{1}{\beta} L_\beta$ with eigenvalue $\lambda$, then
\bel{e:ipure}{
I_\beta(\mu)=\frac{2}{\beta} \lambda .
}
More generally, for $P_{\beta,\eps,\lambda}$ defined as in \eqref{e:proj2} and for $h\in \mathrm{Range}(P_{\beta,\eps,\lambda})$
\bel{e:quasipure}{
\frac{2}{\beta} (\lambda-\eps)\le  I_\beta(\mu)\le \frac{2}{\beta} (\lambda+\eps).
}
\end{remark}

\begin{proof}[Proof of \eqref{e:gc2}: $\Gamma$-liminf inequality]
Let $\mu_\beta$ be a sequence converging to $\mu$ in $\mc P(M)$, we need to show that
\bel{e:ginfs}{
\varliminf_\beta  \beta I_\beta(\mu_\beta)\ge J(\mu).
}
From the $\Gamma$-liminf \eqref{e:gc1} proved in Section~\ref{s:gammauno}, the liminf in \eqref{e:ginfs} equals $+\infty$ unless  $I(\mu)=0$, namely $\mu$ has the form
\bel{e:mualpha}{
\mu=\sum_{z\in \wp}\alpha_z \delta_z.
}
Up to passing to a suitable subsequence (still labeled $\beta$ hereafter), we can assume that the liminf is actually a limit, so that with no loss of generality $I_\beta(\mu_\beta)<+\infty$ for all $\beta$ large enough. Then by Proposition~\ref{p:dvmore} we can assume that $\varrho_\beta=d\mu_\beta/dm_\beta$ is  in $C^2(M)$ and bounded away from $0$. In particular $h_\beta:=\sqrt{\varrho_\beta}\in C^2(M)$. Fix $\eps>0$, $\Lambda$ large enough and such that
 \bel{e:grandlamb}{
\Lambda >\max_{z\in \wp} \lambda_{z,\underline{0}},
}
and let $\beta \ge \max_{(z,\undn)\,:\:\lambda_{z,\undn}\le \Lambda} \beta_{\eps,z,\undn}$.
By Proposition~\ref{p:vittorini} and using the notation therein introduced, recalling also \eqref{e:lambdacard}-\eqref{e:proj2}, we can write
\bel{e:hpsi}{
& h_\beta =\sum_{\lambda \le \Lambda} P_{\beta,\eps,\lambda}h_\beta+P_{\beta,\Lambda}h_\beta    , }
where
\begin{gather*}
  P_{\beta,\Lambda} :=\un{(\Lambda,+\infty)}(-\tfrac{1}{\beta}L_\beta)   \      \  \    ,   \   \   \  
 P_{\beta,\eps,\lambda}h_\beta =
 \sum_{(z,\undn)\in S_\lambda} 
\gamma_{\beta,z,\undn} \Psi_{\beta,z,\undn}    \    ,   \\
\gamma_{\beta,z,\undn}:= \dangle{h_\beta}{\Psi_{\beta,z,\undn}}\in [-1,1].
\end{gather*}
By \eqref{e:idir}, Proposition~\ref{p:vittorini} and the Markov spectral inequality
\bel{e:dirgc1}{
\beta I_\beta(\mu_\beta) & =
2\sum_{\lambda \le \Lambda}  \dangle{-\tfrac{1}{\beta} L_\beta P_{\beta,\eps,\lambda} h_\beta}{ P_{\beta,\eps,\lambda} h_\beta}_{m_\beta}
\\ &
\phantom{ = 2\sum_{\lambda \le \Lambda}}
+
2 \dangle{-\tfrac{1}{\beta} L_\beta P_{\beta,\Lambda} h_\beta}{ P_{\beta,\Lambda} h_\beta}_{m_\beta}
\\
& \ge
2\sum_{\lambda \le \Lambda} (\lambda-\eps) \|P_{\beta,\eps,\lambda} h_\beta\|_{L^2(m_\beta)}^2
+
2 \Lambda \|P_{\beta,\Lambda} h_\beta\|_{L^2(m_\beta)}^2
\\ & =
2\sum_{\lambda \le \Lambda} \sum_{(z,\undn)\in S_\lambda} \gamma_{\beta,z,\undn}^2 (\lambda_{z,\undn} -\eps)+ 2 \Lambda \|P_{\beta,\Lambda} h_\beta\|_{L^2(m_\beta)}^2
\\ & \ge
2  \sum_{z\in \wp} \alpha_{\beta,z,\Lambda}( \lambda_{z,\underline{0}}-\eps)
+2  \Lambda \|P_{\beta,\Lambda} h_\beta\|_{L^2(m_\beta)}^2,
}
where we used in the last line $\lambda_{z,\undn} \ge \lambda_{z,\underline{0}}$ and introduced
\bel{e:alphabetaz}{
 \alpha_{\beta,z,\Lambda}=\sum_{\undn\,:\: \lambda_{z,\undn}\le \Lambda} \gamma_{\beta,z,\undn}^2.
}
As we take $\beta\to \infty$, \eqref{e:dirgc1} holds for any $\eps>0$, in particular we gather 
\bel{e:dirgc1c}{
 \varliminf_\beta 
\|P_{\beta,\Lambda} h_\beta\|_{L^2(m_\beta)}^2\le \varliminf_\beta \tfrac{1}{2\Lambda} \beta I_\beta(\mu_\beta)
}
and, since 
\bel{e:alphabetazz}{
\sum_{z\in \wp}\alpha_z =1=\|h_\beta\|_{L^2(m_\beta)}^2= \|P_{\beta,\Lambda} h_\beta\|_{L^2(m_\beta)}^2+\sum_{z\in \wp} \alpha_{\beta,z,\Lambda},
}
still by \eqref{e:dirgc1} and  the fact that $\eps$ is arbitrary (once we take $\beta\to\infty$)
\bel{e:dirgc1d}{
\varliminf_\beta \beta I_\beta(\mu_\beta)
& \ge \varliminf_\beta
2  \sum_{z\in \wp} \alpha_{\beta,z,\Lambda} \lambda_{z,\underline{0}}
+2 \Lambda \left(\sum_{z\in \wp}\alpha_z- \alpha_{\beta,z,\Lambda}\right)
\\ &
= 2 \sum_{z\in \wp} \alpha_z \lambda_{z,\underline{0}}+ 2 \varliminf_\beta  
\sum_{z\in \wp}(\alpha_z- \alpha_{\beta,z,\Lambda})(\Lambda -\lambda_{z,\underline{0}})
\\ &
=J(\mu)+ 2 \varliminf_\beta  
\sum_{z\in \wp}(\alpha_z- \alpha_{\beta,z,\Lambda})(\Lambda -\lambda_{z,\underline{0}}) ,
}
where in the last line we used \eqref{e:deflamb}. Since the sum above is finite, in view of \eqref{e:grandlamb}, we conclude once we show that
\bel{e:alphalamz}{
\varlimsup_\beta \alpha_{\beta,z,\Lambda}\le \alpha_{z},\qquad \forall z\in \wp.
}
To this aim, fix $z\in \wp$ and $\chi \in C_b(M)$, $0\le \chi\le 1$, such that $\chi(z)=1$ and $\chi(z')=0$ for $z'\in \wp$, $z'\neq z$. Then
\bel{e:mutest}{
\alpha_z& = \mu(\chi) 
=\lim_\beta \mu_\beta(\chi)=
\lim_\beta m_\beta(h_\beta^2 \chi)
\\ &
= \lim_\beta \sum_{\lambda,\lambda'\le \Lambda} \sum_{(z,\undn)\in S_\lambda}
 \sum_{(z',\undn')\in S_{\lambda'}} \gamma_{\beta,z,\undn}\gamma_{\beta,z',\undn'}
 \dangle{\chi \Psi_{\beta,z,\undn}}{\Psi_{\beta,z',\undn'}}_{L^2(m_\beta)}
 \\ &\phantom{=}
 		+ \lim_\beta \dangle{\chi P_{\beta,\Lambda}h_\beta}{2 h_\beta- P_{\beta,\Lambda}h_\beta}_{L^2(m_\beta)}.
}
Using \eqref{e:measconv2} both for the on- and off-diagonal terms in the above sum and recalling \eqref{e:alphabetaz} we obtain
\bel{e:mutest2}{
\alpha_z\ge \varlimsup_\beta \alpha_{\beta,z,\Lambda} - 2\|\chi\|_{C_b} \| P_{\Lambda,\beta}h_\beta\|_{L^2(m_\beta)}
\ge  \varlimsup_\beta \alpha_{\beta,z,\Lambda} -  \varliminf_\beta\sqrt{\frac{2\beta I_\beta(\mu_\beta)}{\Lambda} },
}
where in the last inequality we used \eqref{e:dirgc1c}. Since we can assume $\varliminf_\beta \beta I_\beta(\mu_\beta)=C<+\infty$ with no loss of generality, and since $\alpha_{\beta,z,\Lambda}$ is increasing in $\Lambda$, from \eqref{e:mutest2} we gather
\bel{e:mutest3}{
\varlimsup_\beta \alpha_{\beta,z,\Lambda}\le \inf_{\Lambda'\ge \Lambda} \varlimsup_\beta \alpha_{\beta,z,\Lambda'}\le
\alpha_z+  \inf_{\Lambda'\ge \Lambda} \sqrt{\frac{2C}{\Lambda} }=\alpha_z , 
}
namely \eqref{e:alphalamz}.
\end{proof}

\begin{proof}[Proof of \eqref{e:gc2}: $\Gamma$-limsup inequality]
If $J(\mu)=+\infty$ there is nothing to prove. So we can assume $\mu=\sum_{z\in \wp} \alpha_z \delta_{z}$. Recall the functions $(\Psi_{\beta,z,\undn})_{z\in\wp,\undn\in \bb N^d}$ introduced in Proposition~\ref{p:vittorini} and define $\mu_{\beta,z}\in \mc P(M)$ and the recovery sequence $\mu_\beta\in \mc P(M)$ by
\bel{e:muz}{
d\mu_{\beta,z}:= \Psi_{\beta,z,\underline 0}^2\,dm_\beta , 
}
\bel{e:murec1}{
\mu_{\beta}:=\sum_{z\in \wp} \alpha_z m_{\beta,z}.
}
By \eqref{e:measconv2}, $\mu_{\beta,z}\to \delta_z$ in $\mc P(M)$, so that $\mu_\beta\to \mu$.

Using the notation of Proposition~\ref{p:vittorini}, for each $\eps>0$ and for each $\beta\ge \max_{z\in \wp}\beta_{\eps,z,\underline{0}} $, by \eqref{e:quasipure} and \eqref{e:deflamb}
\bel{e:ibetaqu}{
I_\beta(m_{\beta,z})\le \tfrac{2}{\beta} (\lambda_{z,\underline{0}}+\eps)= \tfrac{1}{\beta} (\zeta(z)+2\eps).
}
Now, since $I_\beta$ is convex by Remark~\ref{r:lsc}
\bel{e:ibetag}{
\beta\,I_\beta(\mu_\beta)\le \beta \sum_{z\in \wp} \alpha_z I(m_{\beta,z})\le \sum_{z\in \wp} \alpha_z \,(\zeta(z)+2\eps)=J(\mu)+2\eps.
}
As we take the limsup $\beta\to +\infty$, \eqref{e:ibetag} holds for any $\eps>0$, and thus $\varlimsup_\beta I_\beta(\mu_\beta)\le J(\mu)$.
\end{proof}

\section{Exponential development by $\Gamma$-convergence}
\label{s:gammatre}
In this section we prove \eqref{e:gc3}.
\begin{proof}[Proof of \eqref{e:gc3}: $\Gamma$-liminf inequality]
Fix some $k=1,\ldots,n$ and let  $\mu_\beta$ be a sequence converging to $\mu$. We need to show that
\bel{e:ginf2}{
\varliminf_\beta  \beta e^{\beta W_k} I_\beta(\mu_\beta)\ge J_k(\mu).
}
Up to passing to a suitable subsequence (still labeled $\beta$ hereafter), we can assume that the liminf is actually a limit, so that with no loss of generality $I_\beta(\mu_\beta)<+\infty$ for all $\beta$ large enough. Then by Proposition~\ref{p:dvmore} we can assume that $\varrho_\beta=d\mu_\beta/dm_\beta$ is  in $C^2(M)$ and bounded away from $0$. In particular $h_\beta:=\sqrt{\varrho_\beta}\in C^2(M)$. As in the proof of Lemma~\ref{l:montale} we denote by $P_{\beta,k}:= \un {[\ell_{\beta,k+1},+\infty)}(-\tfrac{1}{\beta}L_\beta)$ the spectral projection of $-\tfrac{1}{\beta}L_\beta$ associated to the interval $[\ell_{\beta,k+1},+\infty)$. Then, since $h_\beta\in L^2(m_\beta)$, we can write
\bel{e:marlim}{
& h_\beta =\sum_{j=0}^k \gamma_{\beta,j} \Phi_{\beta,j}+P_{\beta,k} h_\beta,
\\
&\gamma_{\beta,j}= \dangle{h_\beta}{\Phi_{\beta,j}}\in [-1,1].
}
By \eqref{e:idir} and the spectral Markov inequality
\bel{e:liminf1a}{
I_\beta(\mu_\beta)& 
=
		\frac{2}{\beta} \sum_{j=0}^k \gamma^2_{\beta,j} \ell_{\beta,j}
		-\frac{2}{\beta^2} \int dm_\beta(x)\, P_{\beta,k}h_\beta (x)\,(L_\beta P_{\beta,k}h_\beta)(x) 
\\ &
\ge 
\frac{2}{\beta} \sum_{j=0}^{k} \gamma^2_{\beta,j}  \ell_{\beta,j}
				+\frac{2\ell_{\beta,k+1}}{\beta} \|P_{\beta,k}h_\beta\|_{L^2(m_\beta)}^2,
}				
i.e.				
\bel{e:liminf1b}{		
\beta e^{\beta W_k} I_\beta(\mu_\beta) 
 \ge &
2  \gamma^2_{\beta,k} \,e^{\beta W_k} \ell_{\beta,k}
\\ &
+
2\sum_{j=0}^{k-1} \gamma^2_{\beta,j} \,e^{\beta W_k} \ell_{\beta,j}
+2e^{\beta W_k} \ell_{\beta,k+1} \|P_{\beta,k}h_\beta\|_{L^2(m_\beta)}^2  . 
}

From \eqref{e:lambdai} and assumption \ref{a:5}, the terms $\gamma^2_{\beta,j} \,e^{\beta W_k} \ell_{\beta,j}$  with an index $j\le k-1$ vanish as $\beta\to\infty$, since $\gamma_{\beta,j}^2\le 1$. On the other hand, still by \eqref{e:lambdai} and \ref{a:5}, the term $e^{\beta W_k} \ell_{\beta,k+1} \|P_{\beta,k}h_\beta\|_{L^2(m_\beta)}^2$  diverges to $+\infty$ unless $\|P_{\beta,k}h_\beta\|_{L^2(m_\beta)} $ vanishes. Thus setting $\underline{ \alpha}:= \varliminf_\beta \gamma_{\beta,k}^2$ we gather again by \eqref{e:lambdai}
\bel{e:ilminf2}{
\varliminf_\beta \beta e^{\beta W_k} I_\beta(\mu_\beta) \ge 
\begin{cases}
+\infty & \text{if $\varliminf_\beta  \|P_{\beta,k}h_\beta\|_{L^2(m_\beta)}>0$,}
\\
\underline{ \alpha} \eta_k(x_k)
 & \text{otherwise.}
\end{cases}
}
In particular, if $\varliminf_\beta  \|P_{\beta,k}h_\beta\|_{L^2(m_\beta)}>0$, we proved the inequality \eqref{e:ginf2} no matter what the limit $\mu$ of the $\mu_\beta$ is.

Now we claim that if $\varliminf_\beta  \|P_{\beta,k}h_\beta\|_{L^2(m_\beta)}=0$, then necessarily $\lim_\beta \mu_\beta=\mu=\sum_{j=0}^k \alpha_j\delta_{x_j}$ for some $\alpha_j$, with $\alpha_k= \underline{ \alpha}$, so that $ \varliminf_\beta \beta e^{\beta W_k} I_\beta(\mu_\beta)\ge \alpha_k\,\eta_k(x_k)$, namely \eqref{e:ginf2}. To prove this claim note that
\bel{e:cuc}{
d\mu_\beta & =(h_\beta)^2 dm_\beta
\\ & =\left( \sum_{j=0}^k \gamma_{\beta,j}^2 \Phi_{\beta,j}^2\right)dm_\beta +
\left(\sum_{i\neq j,\,i,j=0}^k \gamma_{\beta,i}\gamma_{\beta,j}\Phi_{\beta,i}\Phi_{\beta,j}\right)dm_\beta
\\ & \phantom{=}
+(P_{\beta,k} h_\beta) \left( 2\sum_{j=0}^k \gamma_{\beta,j} \Phi_{\beta,j}+P_{\beta,k} h_\beta \right) dm_\beta.
}
Since $\|(P_{\beta,k} h_\beta) \|_{L^2(m_\beta)}$ vanishes  as $\beta\to \infty$ and \eqref{e:mbetahk} holds, the last two term above also vanish (weakly) as measures
as $\beta\to \infty$. On the other hand, since $\mu_\beta\to \mu$, it follows from Proposition~\ref{p:quasimodo} that $\alpha_j=\lim_\beta \gamma_{\beta,j}^2$ exists and $\mu=\sum_{j=0}^k \alpha_j\delta_{x_j}$.
\end{proof}

\begin{proof}[Proof of \eqref{e:gc3}: $\Gamma$-limsup inequality]
If $J_k(\mu)=+\infty$ there is nothing to prove. So we can assume $\mu=\sum_{j=0}^k \alpha_j\delta_{x_j}$. For $m_{\beta,j}$ as in \eqref{e:mbetak}, define the recovery sequence $\mu_\beta$ by 
\bel{e:mubetagc2}{
\mu_{\beta}:=\sum_{j=0}^k \alpha_j\, m_{\beta,j}.
}
By Proposition~\ref{p:quasimodo}, $\mu_\beta\to \mu$ in $\mc P(M)$. $I_\beta$ is convex, and by \eqref{e:ipure}
\bel{e:ibetagc2}{
I_\beta(\mu_\beta)\le \sum_{j=0}^k \alpha_j I(m_{\beta,j})=\frac{2}{\beta} \sum_{j=0}^k \alpha_j \ell_{\beta,j}
}
By \eqref{e:lambdai} and hypotheses \ref{a:5} one finally  gets
\bel{e:ibetaineq}{
\varlimsup_{\beta} \beta e^{\beta W_k} I_\beta(\mu_\beta)\le 2\,\alpha_k \lim_{\beta}  e^{\beta W_k}\ell_{\beta,k}=\alpha_k \eta_k(x_k)=J_k(\mu).
}
\end{proof}

\medskip

\noindent{\bf Acknowledgements:}   GDG gratefully acknowledges the financial support of the European Research Council under the European Union's Seventh
Framework Programme (FP/2007-2013) / ERC Grant Agreement number 614492, 
helpful discussions with Tony Leli\`{e}vre and Tom Hudson,  and the warm hospitality at the University of Rome La Sapienza during his frequent visits. 

This work has been carried out thanks to the support of the A*MIDEX
project (n.\ ANR11IDEX000102) funded by the \emph{Investissements
d'Avenir} French Government program, managed by the French National
Research Agency (ANR).

We are grateful to Lorenzo Bertini for introducing us to the problem.

\end{document}